\theoremstyle{thmstyleone}%
  \newtheorem{theorem}{Theorem}[section]
\newtheorem{lemma}[theorem]{Lemma}
\newtheorem{hypothesis}[theorem]{Hypothesis} 
\theoremstyle{thmstyletwo}%
\theoremstyle{thmstylethree}%
\newtheorem*{remark}{Remark}%
 \numberwithin {equation} {section} 
\begin{document}

\title[block-transitive $3$-$(v,k,1)$ designs]{Block-transitive $3$-$(v,k,1)$ designs on exceptional groups of Lie type}

\author[1]{\fnm{Ting} \sur{Lan}}\email{lanting0603@163.com}
\equalcont{These authors contributed equally to this work.}

\author[2,1]{\fnm{Weijun} \sur{Liu}}\email{wjliu6210@126.com}
\equalcont{These authors contributed equally to this work.}

\author*[1]{\fnm{Fu-Gang} \sur{Yin}}\email{18118010@bjtu.edu.cn}
\equalcont{These authors contributed equally to this work.}

\affil[1]{\orgdiv{School of Mathematics and Statistics}, \orgname{Central South University}, \orgaddress{\street{Yuelu South Road}, \city{Changsha}, \postcode{410083}, \state{Hunan}, \country{P.R. China}}}
 
\affil[2]{\orgdiv{College of General Education}, \orgname{Guangdong University of Science and Technology}, \orgaddress{\city{Dongguan}, \postcode{523083}, \state{Guangdong}, \country{P.R. China}}}

\abstract{Let $\mathcal{D}$ be a  non-trivial $G$-block-transitive $3$-$(v,k,1)$ design, where $T\leq G \leq \mathrm{Aut}(T)$ for some finite non-abelian simple group $T$. 
It is proved that if $T$  is a  simple exceptional group of Lie type, then $T$ is either the  Suzuki group ${}^2B_2(q)$ or $G_2(q)$.
Furthermore, if  $T={}^2B_2(q)$ then the design $\mathcal{D}$ has parameters $v=q^2+1$ and $k=q+1$, and so  $\mathcal{D}$ is an inverse plane of order $q$; and if  $T=G_2(q)$  then the point stabilizer in $T$ is either $\mathrm{SL}_3(q).2$ or $\mathrm{SU}_3(q).2$, and the  parameter $k$ satisfies very restricted conditions. }

\keywords{$t$-$(v,k,1)$ design, Steiner $t$-design, block-transitive design, primitive group, almost simple group, exceptional group of Lie type} 

\pacs[MSC Classification]{05B05, 20B25}
 
\maketitle

 \section{Introduction}\label{sec:1}

 For an integer $t\geq 2$, a $t$-$(v,k,1)$ design  $\mathcal{D}= (\mathcal{P},\mathcal{B})$  is a set $\mathcal{P}$ of $v$  \emph{points} and a collection $\mathcal{B}$ of  $k$-subsets of $\mathcal{P}$,  called \emph{blocks},  such that, each block in $\mathcal{B}$ has size $k$, and each $t$-subset of $\mathcal{P}$ lies in a unique block.
If $t<k<v$, then we say $\mathcal{D}$  \emph{non-trivial}. 
 The $t$-$(v,k,1)$ designs are also known as \emph{Steiner $t$-designs}.

An automorphism of $\mathcal{D}$ is a permutation on $\mathcal{P}$ which permutes the blocks among themselves.
For a subgroup $G$ of the automorphism group $\mathrm{Aut}(\mathcal{D})$ of $\mathcal{D}$, the design $\mathcal{D}$ is said to be \emph{$G$-block-transitive} if $G$ acts transitively on the set of blocks, and is said to be \emph{block-transitive} if it is $\mathrm{Aut}(\mathcal{D})$-block-transitive. The point-transitivity and flag-transitivity are defined similarly (a flag of $\mathcal{D}$ is a  pair $(\alpha,B)$ where $\alpha $ is a point and $B$ is a block containing $\alpha$).

Flag-transitive  non-trivial $t$-$(v,k,1)$ designs has been classifed by~Buekenhout et. al~\cite{BDDKLS1990} for $t=2$ (apart from the one-dimensional affine group case), 
and by Cameron and Praeger~\cite{CP1993lagert} for $t\geq 7$ (they proved more generally that there exists no non-trivial $t$-designs with $t\geq 8$), and by Huber~\cite{H2005,H2007-4flag,H2009-5flag,H2010t=7,H2010} for  $3\leq t \leq 7$.
Notice that flag-transitivity implies  block-transitivity, while the converse is not true.

A celebrate theorem of Cameron and Praeger~\cite[Theorem~2.1]{CP1993lagert} (based on result of Ray-Chaudhuri and Wilson~\cite{RCW1975}) asserts that the block-transitive automorphism group of a  $t$-design is a $\lfloor t/2 \rfloor$-homogeneous permutation group on the point set.
Let $\mathcal{D}$ be a non-trivial $G$-block-transitive $t$-$(v,k,1)$ design.
If $t\geq 4$, then $G$ is a well known $2$-homogeneous group (see~\cite{K1972}) and is  affine or almost simple.
Huber proved in~\cite{H2010t=7} that $t\neq 7$, and in~\cite{H2010} that if $t=6$ then  $G=\mathrm{P\Gamma L}_2(p^e)$ with $p\in \{2,3\}$ and $e$ an odd prime power. 
Huber also proved in~ \cite{H2010affine} that, if $t \in \{4,5\}$ and $G$ is affine, then $t=4$ and $G$ is a one-dimensional affine group.  
 
For the cases $t=2$ and $3$, $G$ is $1$-homogeneous, which means that $G$ can be any transitive permutation group. 
This makes the classification of block-transitive $t$-$(v,k,1)$ design with $t\in \{2,3\}$ much open.
There have been a great deal of efforts to classify block-transitive $2$-$(v,k,1)$ designs. 
In 2001, Camina and Praeger~\cite{CP2001} proved that
if $t=2$ and $G$ is quasiprimitive on the point set, then $G$ is either affine or  almost simple.
The case where $G$ is almost simple has been widely studied, such as alternating groups~\cite{CNP2003},  sporadic simple groups~\cite{CS2000}, simple groups of Lie type of  small ranks~\cite{G2007, L2001, L2003, L2003a, L2003b,LLG2006, LLM2001, LZLF2004,   Z2002, Z2005,ZLL2000}, and large dimensional classical groups~\cite{CGZ2008}.  

In this paper, we focus on the case $t=3$. 
For $t=3$, Mann and Tuan~\cite[Corollary~2.3(a)]{MT2001} have shown that  $G$ is primitive on the point set (based on the result of Cameron and Praeger~\cite{CP1993} on block-transitive and point-imprimitive $t$-designs). 
Recently, it is proved by Gan and the second author~\cite{GL2022+} that  $G$ is either  affine or almost simple.
Inspired by this reduction result, we study non-trivial $G$-block-transitive $3$-$(v,k,1)$ design with $G$ an almost simple group.  
The case where the socle of $G$ is an alternating group has been settled by the authors~\cite{LLY2022+}. 
Hence here, we continue to consider  simple  exceptional groups of Lie type, which consists of ten infinite families of simple groups:
${}^2B_2(q)$, ${}^2G_2(q)$, ${}^2F_4(q)$, ${}^3D_4(q)$, ${}^2E_6(q)$, $G_2(q)$, $F_4(q)$, $E_6(q)$, $E_7(q)$, $E_8(q)$. 

Before stating our result, we recall some background of the block-transitive $3$-$(v,k,1)$ design on the Suzuki group ${}^2B_2(q)$. For a positive integer $n$, a $3$-$(n^2+1,n+1,1)$ design is also called a (finite) \emph{inversive plane} (following Dembowski~\cite[p.103]{Dembowski-book}). 
It is proved by L\"{u}neburg~\cite[Theorem~1]{L1964} that if $q=2^e$ with odd $e>1$, then there is one, and up to isomorphism only one, inversive plane of order $q$ which admits an automorphism group isomorphic to  ${}^2B_2(q)$. 
By~\cite[p.275,4]{Dembowski-book}, the  full automorphism group of this inversive plane of order $q$ is  $\mathrm{Aut}({}^2B_2(q))\cong {}^2B_2(q)\,{:}\, e$.
 This inversive plane of order $q$ is ${}^2B_2(q)$-block-transitive by~\cite[Corollary~1]{L1964}, while it is not  flag-transitive. 

\begin{theorem}\label{th:exceptional}  
Let $\mathcal{D}$ be a  non-trivial $G$-block-transitive $3$-$(v,k,1)$ design, where $T\leq G \leq \mathrm{Aut}(T)$ and $T$ is a simple exceptional group of Lie type.  
Then $T$ is either ${}^2B_2(q)$ or $G_2(q)$.  
\begin{enumerate}[\rm (a)]
\item If $T={}^2B_2(q)$, where $q=2^e$ with odd $e>1$, then  $G=T.f$ with $f\mid e$, $v=q^2+1$ and $k=q+1$ and so $\mathcal{D}$ is an inversive plane of order $q$.
\item If $T=G_2(q)$, where $q=p^e$ with prime $p$, then the point stabilizer in $T$ is $ \mathrm{SU}_3(q).2$ or $\mathrm{SL}_3(q).2$, and moreover, $q>10^5$.  
\end{enumerate}  
\end{theorem}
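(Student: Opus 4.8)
The plan is to run the standard machinery for classifying block-transitive Steiner systems on an almost simple group: reduce the point stabiliser to a maximal subgroup, enumerate the candidates from the known maximal-subgroup classifications, and eliminate all but the two stated families by the elementary arithmetic of a $3$-$(v,k,1)$ design. By the theorem of Mann and Tuan~\cite{MT2001}, $G$ is point-primitive, so $G_\alpha$ is maximal in $G$; since $G$ is almost simple its socle $T$ is point-transitive, so, writing $H:=T\cap G_\alpha$, we have $G=TG_\alpha$ and $v=|T:H|$. I would then go through the exceptional families one at a time, using the classification of maximal subgroups of the exceptional groups and their automorphism groups --- due to Suzuki, Cooperstein, Kleidman and Malle for ${}^2B_2(q),G_2(q),{}^2G_2(q),{}^3D_4(q),{}^2F_4(q)$, and to Liebeck--Seitz and subsequent authors for $F_4(q),E_6(q),{}^2E_6(q),E_7(q),E_8(q)$. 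This produces a finite list of candidate pairs $(T,H)$, in each of which $v=|T:H|$ is an explicit function of $q$.

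For a non-trivial $3$-$(v,k,1)$ design the standard necessary conditions are (i) $k-2\mid v-2$; (ii) $(k-1)(k-2)\mid(v-1)(v-2)$, with $b:=v(v-1)(v-2)/\bigl(k(k-1)(k-2)\bigr)\in\mathbb Z$; and, from block-transitivity, (iii) $b\mid|G|$. Moreover Fisher's inequality applied to the derived design at a point --- a $2$-$(v-1,k-1,1)$ design with $r:=(v-1)(v-2)/\bigl((k-1)(k-2)\bigr)$ blocks --- gives $r\ge v-1$, hence $(k-1)(k-2)\le v-2$, so $k<\sqrt v+2$. Feeding this into $b=vr/k\ge v(v-1)/k$ yields $b\gtrsim v^{3/2}$; combined with (iii) and $|G|\le|T|\cdot|\mathrm{Out}(T)|$ this forces $|T:H|^{3/2}\lesssim|T|^{1+o(1)}$, i.e.\ $|H|\gtrsim|T|^{1/3-o(1)}$. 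For the large-rank exceptional groups this order bound alone discards every maximal subgroup except the maximal parabolics and a short list of large reductive subgroups of maximal rank; for each survivor I would write $v$ as a polynomial in $q$, exhibit a primitive prime divisor of some $q^n-1$ dividing $v-1$ or $v-2$ but too large to divide $(k-1)(k-2)$ or $r$, and thereby contradict (i) or (ii). The subfield subgroups are dealt with in the same manner.

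After this only $T={}^2B_2(q)$ and $T=G_2(q)$ survive. For ${}^2B_2(q)$ the order bound leaves only the Frobenius point stabiliser of order $q^2(q-1)$ in the natural $2$-transitive action, so $v=q^2+1$; the bounds $k<\sqrt v+2$ and $k-2\mid q^2-1$ together with (iii) then force $k=q+1$, which recovers L\"uneburg's inversive plane~\cite{L1964}, and $G=T.f$ with $f\mid e$ follows from $\mathrm{Aut}(T)=T.e$. For $G_2(q)$ the order bound leaves the two maximal parabolics, the subsystem subgroups $\mathrm{SL}_3(q).2$ and $\mathrm{SU}_3(q).2$, and a few further maximal subgroups occurring only for small or special $q$; the parabolics and the extra subgroups are removed by (i)--(iii), leaving the two stated point stabilisers with $v=q^3(q^3+\varepsilon)/2$ for $\varepsilon=\pm1$. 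For these two families I do not expect (i)--(iii) to rule out existence; a finer analysis of the admissible $k$ --- in particular identifying which prime powers may divide $k-1$ and $k-2$ --- together with a computer check for the small remaining values of $q$, should yield both the restrictions on $k$ and the bound $q>10^5$.

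The bulk of the work --- carrying the elementary number theory through uniformly for every maximal-subgroup type of every exceptional family --- is laborious but essentially routine. The genuine obstacle is the $G_2(q)$ case: there the arithmetic does not collapse, both $\mathrm{SL}_3(q).2$ and $\mathrm{SU}_3(q).2$ pass all the easy tests, and extracting the bound $q>10^5$ together with the precise conditions on $k$, rather than simply eliminating the case, is where the real effort lies.
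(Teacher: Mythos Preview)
Your overall architecture matches the paper's: point-primitivity via Mann--Tuan, the ``large subgroup'' bound $|G_\alpha|^3\gtrsim|G|$, enumeration of maximal subgroups, then arithmetic elimination. But the elimination toolkit you propose is different from, and weaker than, the one the paper actually uses. The paper's workhorse is the inequality
\[
\gcd\bigl(|G_\alpha|,(v-1)(v-2)\bigr)>\sqrt{v}-2,
\]
derived from $|G_\alpha|k(k-1)(k-2)=|G_B|(v-1)(v-2)$ and Fisher. For each candidate $(T,T_\alpha)$ one treats $|T_\alpha|$ and $(v-1)(v-2)$ as polynomials in $q$ and computes their polynomial $\gcd$ via the extended Euclidean algorithm, producing an explicit integer polynomial $R_1(q)$ with $\gcd(|T_\alpha|,(v-1)(v-2))\mid R_1(q)$; then $R_1(q)\cdot|\mathrm{Out}(T)|>\sqrt{v}-2$ bounds $q$, and the finitely many survivors are checked by machine. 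This works uniformly across dozens of families without case-by-case cleverness. Your primitive-prime-divisor plan is vaguer: a ppd of $q^n-1$ dividing $v-1$ or $v-2$ does not by itself contradict~(i)--(ii), since it may well divide $r=(v-1)(v-2)/((k-1)(k-2))$; you would need to tie it back to $|G_\alpha|$ via the block-stabiliser equation, which is precisely what the $\gcd$ inequality packages. For the stubborn $E_6$ and $E_7$ parabolics the paper additionally brings in subdegree information (a sharpened divisibility $(v-1)(v-2)\mid fk(k-1)(k-2)d(d-1)$ for each subdegree $d$), which you do not mention.

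Two further points. For ${}^2B_2(q)$ and ${}^2G_2(q)$ in their $2$-transitive actions the paper introduces a clean device you omit: a \emph{quasi-semiregular} subgroup of order $t-1$ in $G_\alpha$ forces $k-1\mid v-1$, which for ${}^2B_2(q)$ gives $k=2^a+1$ immediately and for ${}^2G_2(q)$ gives $k=3^a+1$ or $k=3^a$; this is much sharper than bare divisibility. And your claim that ``$k<\sqrt v+2$, $k-2\mid q^2-1$ and (iii) force $k=q+1$'' for ${}^2B_2(q)$ is not quite true: at $q=8$ the value $k=5$ passes all of (i)--(iii) with $G={}^2B_2(8){:}3$, and the paper disposes of it by a direct orbit computation. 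So your outline is sound, but the paper's $\gcd$ inequality and quasi-semiregular lemma are the ideas that make the elimination actually go through.
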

We remark that, in part (b) of Theorem~\ref{th:exceptional}, the parameter $k$  satisfies very limited  conditions.  
It shall be proved that if the point stabilizer in $T$ is $ \mathrm{SU}_3(q).2$, then  $v=q^3(q^3-1)/2$ and $k$ satisfies the following conditions:
\begin{itemize}
\item $k<\sqrt{v}+2$,
\item $k-2 \mid v-2$, $(k-1)(k-2)\mid (v-1)(v-2)$ and $k(k-1)(v-2) \mid v(v-1)(v-2)$,
\item $4(v-1)(v-2)/(q^3+1)=(q^3-2)(q^6-q^3-4) \mid 192k(k-1)(k-2)e$,
\end{itemize} 
and if the point stabilizer in $T$ is $ \mathrm{SL}_3(q).2$, then $v=q^3(q^3+1)/2$ and $k$ satisfies the first and second conditions above, together with
\begin{itemize} 
\item $4(v-1)(v-2)/(q^3-1)=(q^3+2)(q^6+q^3-4) \mid 192k(k-1)(k-2)e$.
\end{itemize} 
Despite a lot of efforts, we still can't prove the non-existence of such design.

In the process of proving Theorem~\ref{th:exceptional}, we find  some interesting results,  which we believe would be useful for studying block-transitive $t$-$(v,k,1)$ designs.

Following~\cite{KMMM2013}, a permutation group on a finite set is said to be \emph{quasi-semiregular} if it fixes only one point and acts semiregularly on the remaining points.  
We find that if a $t$-$(v,k,1)$ design with $t\geq 3$ admits a quasi-semiregular automorphism group of order $t-1$ or $t$, then $v-1$ is divisible by $k-1$ or $k$ (see Lemma~\label{lm:order3} for detail). 
In the work of Huber~\cite{H2005} on classifying $G$-flag-transitive $3$-$(v,k,1)$ designs, 
he spent about $5$ pages to deal with the case where ${}^2G_2(q)\leq G\leq \mathrm{Aut}({}^2G_2(q))$ with its $2$-transitive action.  
Our finding helps us give  a conciser proof for the ${}^2G_2(q)$ case (see Lemma~\ref{lm:2G2q}).  

Let $\mathcal{D}=(\mathcal{P},\mathcal{B}) $ be a non-trivial  $G$-block-transitive $3$-$(v,k,1)$ design, and let $\alpha \in \mathcal{P}$ and $B\in \mathcal{B}$.
It is proved in Lemma~\ref{lm:akbv}  that  
\[ \vert G_{\alpha} \vert (  \vert G_{\alpha} \vert / \vert G_{B} \vert  +2)^2> \vert G \vert .\]
Following Alavi and Burness~\cite{AB2015}, a proper subgroup $Y$ of a finite group $X$ is  said to be \emph{large} if $ \vert Y \vert ^3\geq  \vert X \vert $.
The above inequality implies that $G_\alpha$ is almost large in $G$.
We further prove that if $G$ is almost simple, then $ G_\alpha$ must be large in $G$. 
Large maximal subgroups of  almost simple  exceptional groups  of Lie type is determined  by Alavi, Bayat and Daneshkhah~\cite{ABD2022}.  
Based on their results, we obtain all candidates for $(G,G_\alpha)$ satisfying the assumption of Theorem~\ref{th:exceptional}.
It is also proved in Lemma~\ref{lm:akbv} that
\[\mathrm{gcd}( \vert G_\alpha \vert ,(v-1)(v-2))>\sqrt{v}-2.\]
Since $ \vert G_\alpha \vert = \vert G \vert /v$, the above inequality is only on the parameter $v$.
This  inequality  turns out to be very useful, because it has helped us rule out almost all candidates for $(G,G_\alpha)$ in proving Theorem~\ref{th:exceptional}.

 \section{Preliminaries}
For notations of groups, we refer to the Atlas~\cite{Atlas}. 
In particular, for two finite groups $H$ and $K$, we  use $H.K$, $H\,{:}\, K$ and $H \times K$ to present a extension of $H$ by $K$, a split extension (or semidirect product) of  $H$ by $K$, and the direct product of $H$ and $K$, respectively. 
For a positive integer $n$, we also use $n$ to denote the cyclic group of order $n$, and use $[n]$ for a solvable group of order $n$ of unspecified structure. 
The \emph{socle} of a finite group is the product of all its minimal normal subgroups.
A group $G$ is said to be \emph{affine} or \emph{almost simple} if its socle is an elementary abelian group or a non-abelian simple group.
Notice that if $G$ is almost simple, then $T\leq G \leq \mathrm{Aut}(T)$ for some non-abelian simple group $T$.

Our symbols and terms for permutation groups follow Dixon and Mortimer's book~\cite{DM-book}. 
Let $G$ be a finite permutation group on a finite set $\Omega$. 
For a point $\alpha \in \Omega$, denote by $G_\alpha$ the stabilizer of $\alpha$ in $G$, and by $\alpha^G$ the orbit of $G$ containing $\alpha$.

For two positive integers $m$ and $n$, let $\mathrm{gcd}(m,n)$   be their  largest common divisor.  
For a prime power $q\geq 2$ and  an integer $n\geq 3$, a prime $r$ is called a {\em primitive prime divisor} of $q^n-1$ if $r\mid (q^n-1)$ but $r\nmid (q^i-1)$ for any positive integer $i<n$. Primitive prime divisor was first discovered by Zsigmondy~\cite{Zsigmondy}, and the following lemma can be found in~\cite[Lemma~1.13.3]{BHRD2013}.

\begin{lemma}\label{lm:ppd}
Let $q\geq 2$ be a prime power and $n\geq 3$ with $(q,n)\neq (2, 6)$. Then $q^n-1$ has at least one primitive prime divisor $r$. Furthermore, $r\equiv 1\pmod{n}$.
\end{lemma}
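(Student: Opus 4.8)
The plan is to prove the two assertions in turn: first the existence of a primitive prime divisor, which is exactly Zsigmondy's theorem applied to $q^{n}-1$ (the case of base pair $(q,1)$), and then the congruence $r\equiv 1\pmod{n}$, which follows quickly. For existence I would run the classical cyclotomic-polynomial argument. Write $q^{n}-1=\prod_{d\mid n}\Phi_{d}(q)$, where $\Phi_{d}$ is the $d$-th cyclotomic polynomial, and single out the factor $\Phi_{n}(q)$. The key step is the following sublemma: if a prime $r$ divides $\Phi_{n}(q)$, then either $r$ is a primitive prime divisor of $q^{n}-1$, or $r\mid n$; and in the latter case $r$ is the largest prime divisor of $n$ and $r\,\|\,\Phi_{n}(q)$. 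To prove it I would let $e$ be the multiplicative order of $q$ modulo $r$; from $r\mid\Phi_{n}(q)\mid q^{n}-1$ one gets $e\mid n$, and analysing the possibility $e<n$ via the factorisation $q^{n}-1=(q^{e}-1)(1+q^{e}+\cdots+q^{(n/e-1)e})$ forces $n=e\,r^{a}$ with $a\geq 1$; since moreover $e\mid r-1$ by Fermat's little theorem, every prime factor of $e$ is smaller than $r$, so $r$ is the largest prime divisor of $n$, and the lifting-the-exponent lemma then yields $r\,\|\,\Phi_{n}(q)$.

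Granting the sublemma, a primitive prime divisor exists as soon as $\Phi_{n}(q)$ has a prime factor not dividing $n$, and the only ways this can fail are $\Phi_{n}(q)=1$ or $\Phi_{n}(q)=r$ with $r$ the largest prime divisor of $n$. It therefore suffices to prove $\Phi_{n}(q)>n$ whenever $q\geq 2$, $n\geq 3$ and $(q,n)\neq(2,6)$, since $r\leq n$. From $\Phi_{n}(q)=\prod_{\zeta}(q-\zeta)$, the product taken over the primitive $n$-th roots of unity, the triangle inequality gives $\Phi_{n}(q)\geq(q-1)^{\varphi(n)}$; together with $\varphi(n)\geq 2$ for $n\geq 3$ and a direct verification for the handful of $n$ with $(q-1)^{\varphi(n)}\leq n$, this disposes of all $q\geq 3$. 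The case $q=2$ is the delicate one, since the crude bound degenerates to $\Phi_{n}(2)\geq 1$; here I would use a sharper lower estimate for $\Phi_{n}(2)$ (exploiting that the primitive $n$-th roots are spread around the unit circle, so the factors $|2-\zeta|$ cannot all be close to $1$) to conclude that $\Phi_{n}(2)>n$ for every $n\geq 3$ except $n=6$. The value $n=6$ is genuinely exceptional: $\Phi_{6}(2)=3$ equals the largest prime divisor of $6$, so $2^{6}-1=63=3^{2}\cdot 7$ has no primitive prime divisor — precisely the excluded pair.

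For the congruence, let $r$ be a primitive prime divisor of $q^{n}-1$. Then $r\mid q^{n}-1$ while $r\nmid q^{i}-1$ for $1\leq i<n$, so the multiplicative order of $q$ in the cyclic group $(\mathbb{Z}/r\mathbb{Z})^{\times}$ is exactly $n$; since that group has order $r-1$, Lagrange's theorem gives $n\mid r-1$, i.e.\ $r\equiv 1\pmod{n}$.

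The one non-routine ingredient is the sharp lower bound for $\Phi_{n}(2)$ in the borderline case $q=2$, where the inequality $\Phi_{n}(q)>n$ is nearly tight and genuinely fails at $n=6$; everything else — the sublemma, the estimate for $q\geq 3$, the finitely many small-$n$ checks, and the order argument — is routine. As the statement is entirely classical, one may of course simply invoke~\cite[Lemma~1.13.3]{BHRD2013}.
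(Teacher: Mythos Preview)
The paper does not prove this lemma at all: it simply states the result and attributes it to~\cite[Lemma~1.13.3]{BHRD2013} (Zsigmondy's theorem). Your final sentence --- ``one may of course simply invoke~\cite[Lemma~1.13.3]{BHRD2013}'' --- is therefore exactly the paper's approach, and everything preceding it goes well beyond what the paper offers.

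Your sketch of the cyclotomic-polynomial proof is the standard one and is correct in outline. The sublemma (a prime dividing $\Phi_n(q)$ is either a primitive prime divisor or the largest prime factor of $n$, and in the latter case divides $\Phi_n(q)$ only to the first power) is right, as is the reduction to $\Phi_n(q)>n$ and the order argument giving $r\equiv 1\pmod{n}$. You correctly flag that the only genuinely delicate point is the lower bound for $\Phi_n(2)$, where the crude estimate $(q-1)^{\varphi(n)}$ collapses; this can be handled, for instance, via the product formula $\Phi_n(2)=\prod_{d\mid n}(2^{n/d}-1)^{\mu(d)}$ and a careful estimate, but you would need to supply that estimate explicitly to have a complete proof rather than a plan. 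Since the paper is content to cite the result, your sketch is already more than is required.
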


 The next result shall be used in the proof of Theorem~\ref{th:exceptional}.
\begin{lemma}\label{le:divide}
Let $a,b$ be positive integers and let $r$ be a prime. Then $r^a-1$ divides $r^b-1$ if and only if $a$ divides $b$.
\end{lemma}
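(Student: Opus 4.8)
The plan is to prove the two implications separately, both by elementary manipulation of the factorization $x^n-1=(x-1)(x^{n-1}+\cdots+x+1)$; the hypothesis that $r$ is prime plays essentially no role beyond guaranteeing $r\geq 2$.

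For the ``if'' direction, I would write $b=ac$ for some positive integer $c$ and substitute $x=r^a$, $n=c$ to obtain
\[
r^b-1=(r^a)^c-1=(r^a-1)\sum_{i=0}^{c-1}r^{ai},
\]
which exhibits $r^a-1$ as a divisor of $r^b-1$.

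For the ``only if'' direction, assume $r^a-1\mid r^b-1$ and apply the division algorithm to write $b=aq+s$ with $0\leq s<a$. Then
\[
r^b-1=r^s\bigl(r^{aq}-1\bigr)+\bigl(r^s-1\bigr).
\]
The first direction gives $r^a-1\mid r^{aq}-1$, hence $r^a-1$ divides $r^b-1-r^s(r^{aq}-1)=r^s-1$. Since $0\leq r^s-1<r^a-1$ (because $s<a$ and $r\geq 2$), we conclude $r^s-1=0$, so $s=0$ and $a\mid b$.

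The argument is entirely routine, and there is no real obstacle; the only point needing a moment's care is the estimate $r^s-1<r^a-1$, which rests on $r\geq 2$ and is automatic here since $r$ is prime. As an alternative one could deduce both directions simultaneously from the identity $\gcd(r^a-1,r^b-1)=r^{\gcd(a,b)}-1$, but establishing that identity reuses the same division-algorithm computation, so the direct approach above is cleaner.
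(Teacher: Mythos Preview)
Your proof is correct and is essentially the same as the paper's: both directions use the factorization $x^n-1=(x-1)(x^{n-1}+\cdots+1)$, and for the converse both apply the division algorithm $b=aq+s$ and deduce $r^a-1\mid r^s-1$, forcing $s=0$. The only cosmetic difference is that the paper phrases the converse as a contradiction and strips off the factor $r^{aq}$ via $\gcd(r^{aq},r^a-1)=1$, whereas you rearrange directly as $r^b-1=r^s(r^{aq}-1)+(r^s-1)$; the content is identical.
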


\begin{proof}
If  $a$ divides $b$, that is, $b=ma$ for some positive integer $m$,   then 
\[
(r^b-1)=(r^{am}-1)=(r^a-1)(r^{a(m-1)}+\cdots +r^a+1),\]
and hence $r^a-1$ divides $r^b-1$.
Next we assume that $r^a-1$ divides $r^b-1$.
Suppose for a contradiction that $a$ does not divide $b$. 
Let $b=xa+c$, where $1\leq c\leq a-1 $.
Since  $r^a-1$ divides both $r^b-1$ and $r^{xa}-1$, 
we see that $r^a-1$ divides $(r^b-1)-(r^{xa}-1)=r^b-r^{xa}=r^{xa}(r^c-1)$.
Notice that $\gcd(r^{xa},r^a-1)=1$, since $r^{xa}-(r^a-1)(r^{a(x-1)}+\cdots +r^a+1)=r^{xa}-(r^{xa}-1)=1$. 
It follows that $r^a-1$ divides $r^c-1$,   contradicting $c\leq a-1$. 
\end{proof}

For a general $3$-$(v,k,1)$ design, the parameters $v$ and $k$ have the next important relation, which can be obtained from~\cite[Theorem 3A.4]{C1976}. 

\begin{lemma}[{\cite{C1976}}]\label{lm:vk}
Let $\mathcal{D} $ be a non-trivial $3$-$(v,k,1)$ design. Then $v-2\geq (k-1)(k-2)$. 
In particular, $k< \sqrt{v} +2$.
\end{lemma}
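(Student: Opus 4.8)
The plan is to obtain the inequality by an elementary double-counting argument on the blocks through a fixed pair of points, exploiting the hypothesis that every triple lies in a \emph{unique} block, and then to deduce the bound on $k$ by a one-line estimate. Throughout, non-triviality gives $3<k<v$, so in particular $k\geq 4$ and $v>k$, which is all the ``slack'' the argument needs.

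First I would fix a block $B\in\mathcal{B}$, a point $\alpha\in B$, and — using $k<v$ — a point $\beta\in\mathcal{P}\setminus B$. The key preliminary count is: the number of blocks containing the pair $\{\alpha,\beta\}$ equals $(v-2)/(k-2)$. Indeed, each of the $v-2$ triples $\{\alpha,\beta,\gamma\}$ with $\gamma\in\mathcal{P}\setminus\{\alpha,\beta\}$ lies in exactly one block; a block containing $\{\alpha,\beta\}$ accounts in this way for exactly $k-2$ such triples; and two distinct blocks through $\{\alpha,\beta\}$ account for disjoint sets of triples, since otherwise some triple would lie in two blocks. Hence $v-2=(k-2)\cdot(\text{number of blocks through }\{\alpha,\beta\})$; in particular $k-2\mid v-2$. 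Next I would exhibit $k-1$ distinct blocks through $\{\alpha,\beta\}$: for each of the $k-1$ points $\gamma\in B\setminus\{\alpha\}$, let $C_\gamma$ be the unique block containing $\{\alpha,\beta,\gamma\}$, so that $\{\alpha,\beta\}\subseteq C_\gamma$. If $C_\gamma=C_{\gamma'}$ for distinct $\gamma,\gamma'\in B\setminus\{\alpha\}$, then $\{\alpha,\gamma,\gamma'\}\subseteq C_\gamma\cap B$ while $C_\gamma\neq B$ (because $\beta\in C_\gamma\setminus B$), contradicting the uniqueness of the block on the triple $\{\alpha,\gamma,\gamma'\}$. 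Therefore the $C_\gamma$ are $k-1$ pairwise distinct blocks through $\{\alpha,\beta\}$, which forces $(v-2)/(k-2)\geq k-1$, i.e.\ $v-2\geq(k-1)(k-2)$. Finally, since $k\geq 4$ we get $v\geq(k-1)(k-2)+2>(k-2)^2$, hence $\sqrt{v}>k-2$, i.e.\ $k<\sqrt{v}+2$.

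I do not expect any real obstacle here: the whole statement is a consequence of Fisher-type double counting once the $\lambda=1$ condition is invoked, and the only points deserving a word of care are the existence of the external point $\beta$ (guaranteed by $k<v$) and the fact that the divisions above make sense (guaranteed by $k-2\geq 1$). As an alternative, one could pass to the point-derived design, which is a $2$-$(v-1,k-1,1)$ design, and apply the standard replication-number count there together with the observation that the $k-1$ points of a fixed derived block lie on $k-1$ distinct blocks through any point outside it; this yields the same inequality, but the direct pair-counting above is self-contained and slightly shorter.
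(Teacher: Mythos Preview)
Your argument is correct. Note, however, that the paper does not supply its own proof of this lemma: it simply cites Cameron~\cite[Theorem~3A.4]{C1976} and moves on. So there is no ``paper's proof'' to compare against beyond the reference. What you have written is essentially the standard Fisher-type double count specialised to $t=3$, $\lambda=1$: fix a block $B$, a point $\alpha\in B$, an external point $\beta$, and observe that the $k-1$ triples $\{\alpha,\beta,\gamma\}$ with $\gamma\in B\setminus\{\alpha\}$ determine $k-1$ pairwise distinct blocks through $\{\alpha,\beta\}$, forcing $\lambda_2=(v-2)/(k-2)\geq k-1$. This is exactly the argument behind Cameron's theorem in this case, and your derivation of $k<\sqrt v+2$ from $v-2\geq(k-1)(k-2)>(k-2)^2$ is clean. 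The proposal would stand as a self-contained replacement for the citation.
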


The parameters $v$ and $k$ satisfy the following  well-known divisibility conditions (see~\cite[p.180]{DM-book} for example).
 \begin{lemma}\label{lm:brlambda2}
Let $\mathcal{D}=(\mathcal{P},\mathcal{B})$ be a $3$-$(v,k,1)$ design.  
\begin{enumerate}[\rm (a)]
\item $\displaystyle  \vert \mathcal{B} \vert =\frac{v(v-1)(v-2)}{k(k-1)(k-2)}$. 
 

\item For every point $\alpha$ in $\mathcal{P}$, the number of blocks containing $\alpha$ is 
\[
\lambda_1:=\frac{(v-1)(v-2)}{(k-1)(k-2)}. 
\]

  \item For two distinct points  $\alpha$ and $\beta$ in $\mathcal{P}$, the number of blocks containing  $\alpha$ and $\beta$ is 
 \[
\lambda_2:=\frac{ v-2 }{ k-2 }. 
\]

\end{enumerate} 
\end{lemma}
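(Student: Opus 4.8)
The plan is to establish the three identities by elementary double counting, deriving them in the order (c), (b), (a), since each step feeds into the next. Throughout I would use only two facts, both immediate from the definition of a Steiner $3$-design: every $3$-subset of $\mathcal{P}$ lies in exactly one block, and every block has exactly $k$ points.

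For (c), I would fix two distinct points $\alpha,\beta\in\mathcal{P}$ and count the pairs $(\gamma,B)$ with $\gamma\in\mathcal{P}\setminus\{\alpha,\beta\}$ and $B$ a block containing $\{\alpha,\beta,\gamma\}$. On one hand there are $v-2$ choices for $\gamma$, and for each the triple $\{\alpha,\beta,\gamma\}$ lies in a unique block, so there are exactly $v-2$ such pairs. On the other hand, each block $B$ containing both $\alpha$ and $\beta$ has $k-2$ further points, each serving as a valid $\gamma$; so the count equals $\lambda_2(k-2)$, where $\lambda_2$ is the number of blocks through $\alpha$ and $\beta$. Note this count also shows $\lambda_2$ does not depend on the chosen pair. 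Equating the two counts gives $\lambda_2=(v-2)/(k-2)$.

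For (b), I would fix a point $\alpha$ and count the pairs $(\beta,B)$ with $\beta\in\mathcal{P}\setminus\{\alpha\}$ and $B$ a block containing $\{\alpha,\beta\}$. There are $v-1$ choices for $\beta$, and by (c) each lies with $\alpha$ in $\lambda_2$ common blocks, so the count is $(v-1)\lambda_2$; alternatively each block through $\alpha$ contributes its $k-1$ points distinct from $\alpha$, giving $\lambda_1(k-1)$ with $\lambda_1$ the number of blocks through $\alpha$ (again independent of $\alpha$). Hence $\lambda_1=(v-1)\lambda_2/(k-1)=(v-1)(v-2)/((k-1)(k-2))$. For (a), I would count incident point–block pairs $(\alpha,B)$ two ways: $v$ points each on $\lambda_1$ blocks, versus $\vert\mathcal{B}\vert$ blocks each with $k$ points, so $v\lambda_1=k\vert\mathcal{B}\vert$, whence $\vert\mathcal{B}\vert=v(v-1)(v-2)/(k(k-1)(k-2))$; equivalently one may count $3$-subsets directly via $\binom{v}{3}=\vert\mathcal{B}\vert\binom{k}{3}$.

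There is no genuine obstacle here: the whole argument is routine double counting, which is why the paper merely cites~\cite{C1976} and~\cite{DM-book}. The only point worth stressing for later use is that, since $\lambda_2$, $\lambda_1$ and $\vert\mathcal{B}\vert$ are integers, these formulas silently encode the divisibility relations $k-2\mid v-2$, $(k-1)(k-2)\mid(v-1)(v-2)$ and $k(k-1)(k-2)\mid v(v-1)(v-2)$, which are invoked repeatedly in the analysis leading to Theorem~\ref{th:exceptional}.
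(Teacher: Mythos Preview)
Your proof is correct and entirely standard. The paper does not actually supply a proof of this lemma; it simply states the formulas as well-known and refers the reader to~\cite[p.~180]{DM-book}, so your double-counting argument is exactly the kind of verification one would expect to fill in.
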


In the rest of this section, we  collect and prove  some conclusions on block-transitive $3$-$(v,k,1)$ design. 
Notice that a $G$-block-transitive $t$-$(v,k,1)$ design  is $G$-point-transitive (see~\cite[Theorem~6.2B]{DM-book} for example).
Furthermore, due to Mann and Tuan~\cite[Corollary~2.3(a)]{MT2001}, $G$ acts primitively on the point set. 
\begin{lemma}[{\cite{MT2001}}]\label{lm:VP}
Let $\mathcal{D}$ be a  $G$-block-transitive $3$-$(v,k,1)$ design. Then $G$ acts primitively on the point set of $\mathcal{D}$.
\end{lemma}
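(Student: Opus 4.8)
The statement is attributed to Mann and Tuan, so strictly speaking one could simply cite \cite{MT2001}; but let me sketch how the argument runs, since it rests on the earlier work of Cameron and Praeger \cite{CP1993} on block-transitive point-imprimitive designs. The plan is to argue by contradiction: suppose $G$ is block-transitive but \emph{imprimitive} on the point set $\mathcal{P}$, and derive a contradiction from the counting inequalities available for a $3$-$(v,k,1)$ design. First I would record that block-transitivity forces point-transitivity (this is \cite[Theorem~6.2B]{DM-book}, already quoted in the excerpt), so $G$ is a transitive imprimitive group and hence preserves a non-trivial partition of $\mathcal{P}$ into $d$ classes of size $c$, with $v=cd$ and $1<c,d<v$.

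The heart of the matter is the Cameron--Praeger bound for block-transitive, point-imprimitive $t$-designs. For $t=2$ their result gives $k \le c \cdot \gcd(\ldots)$-type constraints; the key inequality one extracts (specialised to $\lambda=1$ and sharpened using $t=3$) is of the shape $v < k^2$ or, more precisely, that a block meets each class in a bounded way, yielding $v-2 \le$ (something quadratic in $k$ that is incompatible with Fisher-type lower bounds). Concretely, I would combine the imprimitivity structure with Lemma~\ref{lm:vk} (which says $v-2 \ge (k-1)(k-2)$, equivalently $k < \sqrt{v}+2$) and with the divisibility data of Lemma~\ref{lm:brlambda2}: block-transitivity makes the intersection numbers $|B \cap \Delta|$ of a block $B$ with the classes $\Delta$ into a well-defined multiset independent of $B$, and a double-count of incident pairs (class, block) together with the values $\lambda_1,\lambda_2$ pins down these intersection sizes tightly. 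The contradiction then comes from playing the \emph{upper} bound $k<\sqrt v+2$ against a \emph{lower} bound on $k$ forced by the requirement that every $3$-subset — in particular every $3$-subset lying inside a single imprimitivity class, or spread two-and-one across classes — sits in a unique block whose intersection pattern with the partition is constrained.

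The main obstacle, and the place where Mann and Tuan's \cite[Corollary~2.3(a)]{MT2001} does real work, is handling the case $t=3$ separately from $t=2$: for $t=2$ point-imprimitive block-transitive designs genuinely exist (this is why the $2$-design theory is harder), so one cannot merely invoke \cite{CP1993} as a black box. The extra leverage for $t=3$ is that block-transitivity now implies $G$ is $\lfloor 3/2\rfloor = 1$-homogeneous, hence transitive, \emph{and} that the derived design at a point is a block-transitive $2$-$(v-1,k-1,1)$ design whose automorphism group $G_\alpha$ inherits strong structure; iterating the imprimitivity/counting argument one level down, together with the global bound $v-2 \ge (k-1)(k-2)$, closes the gap. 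So the step I expect to be delicate is the bookkeeping that converts ``$G$ preserves a partition'' into a numerical inequality sharp enough to collide with Lemma~\ref{lm:vk}; everything else is assembling cited results. Since the excerpt permits assuming earlier-stated results and the statement is already packaged as \cite{MT2001}, in the paper itself it suffices to give the one-line deduction from \cite{CP1993} and \cite{MT2001} rather than reproduce the full argument.
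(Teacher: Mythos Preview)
The paper gives no proof of this lemma at all: it is stated with the attribution \cite{MT2001} and used as a black box, exactly as your final sentence anticipates. So your concluding remark---that in the paper it suffices to cite \cite[Corollary~2.3(a)]{MT2001} (which in turn rests on \cite{CP1993})---is precisely what the paper does, and everything before that in your proposal is extra.

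As for the sketch itself: the overall shape (assume a nontrivial $G$-invariant partition into classes of size $c$, invoke the Cameron--Praeger intersection constraints for block-transitive point-imprimitive designs, and collide the resulting upper bound on $v$ in terms of $k$ and $c$ with the Fisher-type lower bound $v-2\ge (k-1)(k-2)$) is the right direction and is indeed how Mann and Tuan proceed. But several of your intermediate steps are vague or off-target. The remark that block-transitivity for $t=3$ gives $\lfloor 3/2\rfloor$-homogeneity adds nothing beyond point-transitivity, which you already have; and the suggestion to pass to the derived design at a point and ``iterate the imprimitivity/counting argument one level down'' is not how the contradiction is obtained---the actual argument stays at the level of the original design and uses the explicit Cameron--Praeger inequality (for $t\ge 3$, $\lambda=1$ one gets a bound of the form $v\le$ a quantity smaller than $(k-1)(k-2)+2$), so no descent to $G_\alpha$ is needed. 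If you wanted to make the sketch rigorous you would have to quote the precise inequality from \cite{CP1993} or \cite{MT2001} rather than gesture at ``something quadratic in $k$''.
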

 
 The following lemma  plays a key role in our proof. 
 
\begin{lemma}\label{lm:akbv}
Let $\mathcal{D}=(\mathcal{P},\mathcal{B}) $ be a  $G$-block-transitive $3$-$(v,k,1)$ design.  
Let $\alpha \in \mathcal{P}$ and $B\in \mathcal{B}$.
 \begin{enumerate}[\rm (a)] 
\item  $ \vert G_\alpha \vert k(k-1)(k-2)= \vert G_B \vert (v-1)(v-2)$. 
\item  $ \vert G_{\alpha} \vert (  \vert G_{\alpha} \vert / \vert G_{B} \vert  +2)^2> \vert G \vert $.  
\item  $ \mathrm{gcd}( \vert G_\alpha \vert ,(v-1)(v-2))>\sqrt{v}-2$.
\end{enumerate} 
\end{lemma}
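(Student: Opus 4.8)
The plan is to obtain all three parts from the elementary numerology of a block-transitive design, using that such a design is automatically $G$-point-transitive (recorded just before Lemma~\ref{lm:VP}) together with Lemma~\ref{lm:brlambda2} and Lemma~\ref{lm:vk}. Point-transitivity gives $v=|G|/|G_\alpha|$ and block-transitivity gives $|\mathcal{B}|=|G|/|G_B|$. For part~(a) I would then count $\mathcal{B}$ in two ways: equating $|G|/|G_B|=|\mathcal{B}|$ with the formula $|\mathcal{B}|=v(v-1)(v-2)/(k(k-1)(k-2))$ of Lemma~\ref{lm:brlambda2}(a), substituting $v=|G|/|G_\alpha|$ and cancelling the common factor $|G|$, one gets
\[
\frac{|G_\alpha|}{|G_B|}=\frac{(v-1)(v-2)}{k(k-1)(k-2)},
\]
which is the asserted identity after clearing denominators.

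For part~(b) I would feed the identity from~(a) into the two consequences of Lemma~\ref{lm:vk}, namely $(k-1)(k-2)\le v-2$ and $k<\sqrt{v}+2$. The first gives $|G_\alpha|/|G_B|\ge (v-1)/k$. Since the design is non-trivial we have $3=t<k<v$, so $v\ge 5$ and $\sqrt{v}-2>0$; then the second consequence yields $k(\sqrt{v}-2)<(\sqrt{v}+2)(\sqrt{v}-2)=v-4<v-1$, hence $(v-1)/k+2>\sqrt{v}$, and therefore $|G_\alpha|/|G_B|+2>\sqrt{v}$. Squaring and multiplying through by $|G_\alpha|$, and using $v|G_\alpha|=|G|$, gives $|G_\alpha|(|G_\alpha|/|G_B|+2)^2>|G|$.

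For part~(c), I would set $d=\gcd(|G_\alpha|,(v-1)(v-2))$ and rewrite the identity of~(a) as $\tfrac{(v-1)(v-2)}{d}\,|G_B|=\tfrac{|G_\alpha|}{d}\,k(k-1)(k-2)$. Since $(v-1)(v-2)/d$ and $|G_\alpha|/d$ are coprime positive integers, the factor $(v-1)(v-2)/d$ must divide $k(k-1)(k-2)$, whence $(v-1)(v-2)/d\le k(k-1)(k-2)$, i.e.\ $d\ge (v-1)(v-2)/(k(k-1)(k-2))$. But the chain of inequalities used in~(b) shows precisely that $(v-1)(v-2)/(k(k-1)(k-2))\ge(v-1)/k>\sqrt{v}-2$, so $d>\sqrt{v}-2$. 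All three steps are short; the only points needing a little care are the bookkeeping in the elementary inequalities of~(b) (in particular that $\sqrt{v}-2>0$ in the range that matters) and the coprimality argument in~(c) forcing $(v-1)(v-2)/d\mid k(k-1)(k-2)$, and I do not anticipate any genuine obstacle.
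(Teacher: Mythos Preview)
Your proof is correct and follows essentially the same route as the paper's: part~(a) is obtained exactly as you do by equating $|G|/|G_B|=|\mathcal{B}|=v(v-1)(v-2)/(k(k-1)(k-2))$ and $|G|/|G_\alpha|=v$; part~(b) the paper writes the same chain $\frac{|G_\alpha|}{|G_B|}=\frac{(v-1)(v-2)}{k(k-1)(k-2)}\ge\frac{v-1}{k}>\frac{v-4}{\sqrt{v}+2}=\sqrt{v}-2$ and then squares, while you rearrange the last step algebraically but equivalently; and for part~(c) the paper argues that $|G_\alpha|\mid |G_B|(v-1)(v-2)$ forces $|G_\alpha|\mid |G_B|\cdot\gcd(|G_\alpha|,(v-1)(v-2))$, hence $|G_\alpha|/|G_B|\le d$, which is the dual formulation of your coprimality argument and yields the identical bound. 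Your extra care that $\sqrt{v}-2>0$ (via non-triviality) is a point the paper leaves implicit through its appeal to Lemma~\ref{lm:vk}.
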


\begin{proof}
 Notice that the block-transitivity implies the point-transitivity.
Therefore, $v \vert G_\alpha \vert = \vert G \vert = \vert \mathcal{B} \vert \cdot  \vert G_{B} \vert $. 
By Lemma~\ref{lm:brlambda2}(a),  we have
\[
v \vert G_\alpha \vert =\frac{v(v-1)(v-2)}{k(k-1)(k-2)}  \vert G_{B} \vert ,
\]  
which leads to part (a) of the lemma.
 
 By Lemma~\ref{lm:vk} and  part (a), we have
\begin{equation}\label{eq:abv2}
\frac{ \vert G_{\alpha} \vert }{ \vert G_{B} \vert }=\frac{(v-1)(v-2)}{k(k-1)(k-2)}\geq \frac{v-1}{k}>\frac{v-4}{\sqrt{v} +2}=\sqrt{v}-2.
\end{equation} 
Since $v= \vert G \vert / \vert G_\alpha \vert $ by the point-transitivity of $G$, it follows 
$(G_{\alpha} \vert / \vert G_{B} \vert +2)^2> \vert G \vert / \vert G_\alpha \vert $  and so $ \vert G_{\alpha} \vert (  \vert G_{\alpha} \vert / \vert G_{B} \vert  +2)^2> \vert G \vert $, proving part (b).
  
From part (a)  we see that  $ \vert G_{\alpha} \vert $ divides $ \vert G_{B} \vert (v-1)(v-2)$, which implies that $ \vert G_{\alpha} \vert $ divides $ \vert G_{B} \vert  \cdot\mathrm{gcd}( \vert G_{\alpha}  \vert ,(v-1)(v-2))$.
Consequently, 
\[ \vert G_{\alpha}  \vert / \vert G_B \vert \leq \mathrm{gcd}( \vert G_{\alpha}  \vert ,(v-1)(v-2)).\]
This together with $ \vert G_{\alpha} \vert / \vert G_{B} \vert >\sqrt{v}-2$ lead  to part (c). 
\end{proof}


Recall that a permutation group is said to be \emph{semiregular} if the stabilizer of every point is trivial, and that a permutation group is said to be \emph{quasi-semiregular} if it fixes only one point and acts semiregularly on the remaining points.
For a $G$-block-transitive $3$-$(v,k,1)$ design, it is pointed out by~\cite[Lemma~2.5]{GL2022+} that if $G$ contains a semiregular automorphism group of order $3$, then $k$ divides $v$.
Inspired by their idea, we obtain more similar results.

\begin{lemma}\label{lm:order3}
Let $\mathcal{D}=(\mathcal{P},\mathcal{B}) $ be a $G$-block-transitive $t$-$(v,k,1)$ design with $t\geq 3$, and let $H$ be a subgroup of $G$. 
\begin{enumerate}[\rm (a)]
\item If $H$  has order $t$ and is semiregular on $\mathcal{P}$, then $v$  is divisible by $ k$. 
\item If $H$ has order $t$ and  is quasi-semiregular on $\mathcal{P}$, then $v-1 $ is divisible by  $k$ or $k-1$. 
\item If $H$ has order $t-1$ and  is quasi-semiregular on $\mathcal{P}$, then $v-1 $ is divisible by   $k-1$.
\end{enumerate} 
\end{lemma}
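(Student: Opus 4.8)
The plan is to prove all three parts by a single combinatorial device: count the $H$-invariant blocks together with the $H$-orbits on points that each of them contains. The only input needed is the Steiner-design axiom (a unique block through every $t$-subset) and the fact that $H$ acts on $\mathcal D$; block-transitivity will play no role.

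For part~(a): since $\vert H\vert=t$ and $H$ is semiregular on $\mathcal P$, every $H$-orbit on $\mathcal P$ has size $t$, so $\mathcal P$ splits into $v/t$ orbits. Each orbit $O$ is a $t$-subset and hence lies in a unique block $B_O$; for $h\in H$ one has $O=O^h\subseteq(B_O)^h$, and uniqueness forces $(B_O)^h=B_O$, so $B_O$ is $H$-invariant. Conversely, any $H$-invariant block $B$ satisfies $\beta^H\subseteq B$ for each $\beta\in B$, hence is a disjoint union of $H$-orbits; since $\vert B\vert=k$ this forces $t\mid k$ and shows that $B$ contains exactly $k/t$ orbits, each of which is sent to $B$ by $O\mapsto B_O$. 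Thus $O\mapsto B_O$ is a surjection from the $v/t$ orbits onto the set of $H$-invariant blocks, all its fibres have size $k/t$, and comparing cardinalities gives $v/t=(k/t)N$, where $N$ is the number of $H$-invariant blocks; hence $v=kN$ and $k\mid v$.

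Part~(b) proceeds along the same lines but needs a case split. Now $H$ fixes a unique point $\infty$ and is semiregular on the remaining $v-1$ points, which form $(v-1)/t$ orbits of size $t$; the $H$-orbits on $\mathcal P$ are these orbits together with the singleton $\{\infty\}$. Each size-$t$ orbit again lies in a unique, necessarily $H$-invariant, block, and each $H$-invariant block is a disjoint union of $H$-orbits. Such a block either avoids $\infty$, which forces $t\mid k$ and makes it contain exactly $k/t$ orbits of size $t$, or contains $\infty$, which forces $t\mid k-1$ and makes it contain exactly $(k-1)/t$ orbits of size $t$. Since $t\geq 3$, $t$ cannot divide both $k$ and $k-1$, so all $H$-invariant blocks are of one kind, and as at least one $H$-invariant block exists (take $B_O$ for any size-$t$ orbit $O$), exactly one of the two possibilities occurs. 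Counting the $(v-1)/t$ size-$t$ orbits through $O\mapsto B_O$ then gives $(v-1)/t=(k/t)N$ in the first case and $(v-1)/t=((k-1)/t)N$ in the second, i.e.\ $k\mid v-1$ or $k-1\mid v-1$ respectively. The delicate point is exactly this dichotomy: one must check that precisely one of $t\mid k$ and $t\mid k-1$ holds---which follows from the existence of some $H$-invariant block together with $t\geq 3$---and that each fibre of $O\mapsto B_O$ has the uniform size forced by $\vert B\vert=k$.

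For part~(c) I would pass to the derived design $\mathcal D_\infty=(\mathcal P\setminus\{\infty\},\{\,B\setminus\{\infty\}:\infty\in B\in\mathcal B\,\})$ of $\mathcal D$ at the unique fixed point $\infty$ of $H$. This is a $(t-1)$-$(v-1,k-1,1)$ Steiner design, and since $H$ fixes $\infty$ it acts on $\mathcal D_\infty$, with order $t-1$ and semiregularly on $\mathcal P\setminus\{\infty\}$. Applying the argument of part~(a) to $\mathcal D_\infty$---which is legitimate because that argument never used block-transitivity---yields $(k-1)\mid(v-1)$, as required.
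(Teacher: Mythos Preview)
Your proofs of (a) and (b) are correct and follow essentially the same idea as the paper's: each size-$t$ orbit lies in a unique $H$-invariant block, and every $H$-invariant block is a disjoint union of $H$-orbits. You phrase this as a fibre count via the map $O\mapsto B_O$, whereas the paper phrases it as a partition of $\mathcal P$ (respectively of $\mathcal P\setminus\{\alpha\}$) by the $H$-invariant blocks; these are two presentations of the same argument.

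For (c) you take a genuinely different route. The paper argues directly: for each size-$(t-1)$ orbit $O$ the $t$-set $\{\alpha\}\cup O$ lies in a unique $H$-invariant block through $\alpha$, any two such blocks meet only in $\alpha$, and hence the sets $B_j\setminus\{\alpha\}$ partition $\mathcal P\setminus\{\alpha\}$. Your reduction via the derived design $\mathcal D_\infty$ is cleaner and more conceptual---it exhibits (c) as an instance of (a) rather than re-running a parallel argument, and it makes transparent why the fixed point $\infty$ forces $k-1$ rather than $k$. One small point worth stating explicitly: $\mathcal D_\infty$ is a $(t-1)$-$(v-1,k-1,1)$ design, so when $t=3$ you are invoking the argument of (a) with parameter $t'=2$, outside the stated hypothesis $t\geq 3$; since your proof of (a) uses neither block-transitivity nor the bound $t\geq 3$, this causes no difficulty, but it is the second thing (not only block-transitivity) that must be checked for the reduction to be legitimate.
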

\begin{proof}

(a) The case $t=3$ is proved in~\cite[Lemma~2.5]{GL2022+}, and the general case can be proved similarly. 
For the reader's convenience, we prove it here.

Let $O_1,\ldots,O_m $ be all orbits of $H$ on $\mathcal{P}$. 
Since $H$ is semiregular, we see $ \vert O_i \vert =t$ for every $i\in \{1,\ldots,m\}$.
Let $B_1,...,B_s$ be all blocks containing one of $\{O_1,\ldots,O_m \}$.
Then $\mathcal{P} =\bigcup_{i=1}^{m}O_i \subseteq \bigcup_{j=1}^{s}B_j$.
By the definition of $t$-$(v,k,1)$ design, $O_i$  is contained in exactly one block for every $i\in \{1,\ldots,m\}$, which implies that $B_j$ is fixed by $H$ and hence $B_j$ is the union of some orbits of $H$ for every $j\in \{1,\ldots,s\}$.
Moreover, for two distinct $j_1,j_2 \in \{1,\ldots,s\}$, the intersection of the blocks $B_{j_1}$ and $B_{j_1}$ is the empty set.
Consequently, $\{B_1,...,B_s \}$ is a partition of  $\{O_1,\ldots,O_m \}$ and so $\mathcal{P} =\bigcup_{i=1}^{m}O_i =\bigcup_{j=1}^{s}B_j$.
It follows that $k$ divides $v$.

(b) Since $H$ is quasi-semiregular, $H$ fixes a unique point, namely  $\alpha$. 
Let  $O_0,O_1,\ldots,O_m $ be all orbits of $H$ on $\mathcal{P}$, where $O_0=\{ \alpha \} $. 
Then $ \vert O_0 \vert =1$ and $ \vert O_i \vert =t$ for every $i\in \{1,\ldots,m\}$.  
Let $B_1,...,B_s$ be all blocks containing one of $\{O_1,\ldots,O_m \}$.
Then  $B_j$ is fixed by $H$ and hence $B_j$ is the union of some orbits of $H$ for every $j\in \{1,\ldots,s\}$.
Now there are two cases according to whether $\alpha$ is in $B_j$.
If $\alpha \notin B_j$ for any  $j\in \{1,\ldots,s\}$, then $\mathcal{P}\setminus \{ \alpha \}=\bigcup_{i=1}^{m}O_i =\bigcup_{j=1}^{s} B_j $, and hence $v-1$ is divisible by $k$.

Assume that $\alpha \in B_j$ for some  $j\in \{1,\ldots,s\}$. Then $k= \vert B_j \vert \equiv 1\pmod{t}$.
For every $\ell \in \{1,\ldots,s\}$, since $B_\ell$ is the union of some orbits of $H$  we conclude that $\alpha \in B_\ell$.
Then  $\mathcal{P}\setminus \{ \alpha \}=\bigcup_{i=1}^{m}O_i =\bigcup_{\ell=1}^{s}(B_\ell\setminus \{ \alpha\})$, and hence $v-1$ is divisible by $k-1$.

(c) Now we also let $O_0,O_1,\ldots,O_m $ be all orbits of $H$ on $\mathcal{P}$, where $O_0=\{ \alpha \} $ and  $ \vert O_i \vert =t-1$ for every $i\in \{1,\ldots,m\}$.
Notice that $O_0\cup O_i$ has size $t$ and hence is contained in a unique block.
Let $B_1,...,B_s$ be all blocks containing one of $\{O_0\cup O_1,\ldots, O_0\cup O_m \}$.
Then $B_j$ is fixed by $H$ and hence $B_j$ is the union of some orbits of $H$ for every $j\in \{1,\ldots,s\}$.
Notice that $\alpha \in  B_{j_1} \cap B_{j_2}$ for every two distinct $j_1,j_2\in \{1,\ldots,s\}$.
If $\beta \in  (B_{j_1} \cap B_{j_2})\setminus \{ \alpha \} $, then $\beta^H \in (B_{j_1} \cap B_{j_2})$, and hence $\{ \alpha \}\cup  \beta^H $ is contained in two distinct blocks, a contradiction.
Therefore,  $B_{j_1} \cap B_{j_2} =\{ \alpha\}$.
This implies that $\mathcal{P}\setminus \{ \alpha \}=\bigcup_{i=1}^{m}O_i =\bigcup_{\ell=1}^{s}(B_\ell\setminus \{ \alpha\})$, and it follows that $v-1$ is divisible by $k-1$. 
\end{proof}


For a $G$-block-transitive $3$-$(v,k,1)$ design, it is proved in~\cite[Lemma~2.8]{GL2022+} that $(v-1)(v-2)$ divides $k(k-1)(k-2)d(d-1)$ for every subdegree $d>1$ of $G$.
The following lemma generalizes this result, and  it is convenience for computation (for example, if $G$ is almost simple, then we only need to compute the subdegrees of $T$, not of all candidates for $G$ between $T$ and $\mathrm{Aut}(T)$).

\begin{lemma}\label{lm:vvdd}
Let $\mathcal{D}=(\mathcal{P},\mathcal{B}) $ be a  $G$-block-transitive $3$-$(v,k,1)$ design, and let $T$ be a normal subgroup of $G$ such that $T$ is transitive on $\mathcal{P}$, and let $f= \vert G \vert / \vert T \vert $.
For every subdegree $d>1$ of $T$ on $\mathcal{P}$, there hold that 
\begin{enumerate}[\rm (a)]
\item $(v-1)(v-2)$  divides  $fk(k-1)(k-2)d(d-1)$, and
\item $ f\cdot \mathrm{gcd}(d(d-1),(v-1)(v-2)) >\sqrt{v}-2$.
\end{enumerate}  
\end{lemma}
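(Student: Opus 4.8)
The plan is to adapt the orbit--stabilizer bookkeeping behind~\cite[Lemma~2.8]{GL2022+}, but to carry it out inside $T$ rather than $G$, absorbing the discrepancy into the factor $f$ via $|G|=f|T|$. Part~(b) will then drop out of part~(a) together with the inequality $(v-1)(v-2)\ge k(k-1)(k-2)(\sqrt v-2)$ already recorded in~\eqref{eq:abv2}.

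For part~(a), fix $\alpha\in\mathcal P$, let $\Delta$ be a $T_\alpha$-orbit on $\mathcal P$ of length $d>1$, and fix $\beta\in\Delta$. For each $\gamma\in\Delta\setminus\{\beta\}$ let $B_\gamma$ be the unique block containing $\{\alpha,\beta,\gamma\}$; then $T_{\alpha\beta\gamma}\le G_{B_\gamma}$, so $|T_{\alpha\beta\gamma}|$ divides $|G_{B_\gamma}|$. Since $T$ is transitive on $\mathcal P$, the orbit--stabilizer theorem gives $|T|=v\,d\,e_\gamma\,|T_{\alpha\beta\gamma}|$ with $e_\gamma=|\gamma^{T_{\alpha\beta}}|$, whence $|T|$ divides $v\,d\,e_\gamma\,|G_{B_\gamma}|$. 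On the other hand, block-transitivity of $G$ and Lemma~\ref{lm:brlambda2}(a) give
\[
f|T|=|G|=|\mathcal B|\,|G_{B_\gamma}|=\frac{v(v-1)(v-2)}{k(k-1)(k-2)}\,|G_{B_\gamma}|.
\]
Eliminating $|T|$ and $|G_{B_\gamma}|$ from these two relations shows that $\dfrac{d\,e_\gamma\,f\,k(k-1)(k-2)}{(v-1)(v-2)}$ is a positive integer; that is, $(v-1)(v-2)$ divides $f\,k(k-1)(k-2)\,d\,e_\gamma$ for every $\gamma\in\Delta\setminus\{\beta\}$.

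To replace $e_\gamma$ by $d-1$, observe that $\gamma^{T_{\alpha\beta}}\subseteq\gamma^{T_\alpha}=\Delta$ and that $\gamma^{T_{\alpha\beta}}$ avoids the $T_{\alpha\beta}$-fixed point $\beta$; hence the orbits $\gamma^{T_{\alpha\beta}}$, as $\gamma$ runs over $\Delta\setminus\{\beta\}$, partition the $(d-1)$-element set $\Delta\setminus\{\beta\}$. Writing $N=(v-1)(v-2)/\gcd\bigl((v-1)(v-2),\,f\,k(k-1)(k-2)\,d\bigr)$, the divisibility just proved forces $N\mid e_\gamma$ for all $\gamma$, and summing over a set of orbit representatives gives $N\mid d-1$; unwinding the definition of $N$ yields $(v-1)(v-2)\mid f\,k(k-1)(k-2)\,d(d-1)$, which is~(a). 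For~(b), a comparison of $p$-adic valuations shows that~(a) actually implies the sharper $(v-1)(v-2)\mid f\,k(k-1)(k-2)\,\gcd\bigl(d(d-1),(v-1)(v-2)\bigr)$; therefore $f\,\gcd\bigl(d(d-1),(v-1)(v-2)\bigr)\ge (v-1)(v-2)/\bigl(k(k-1)(k-2)\bigr)>\sqrt v-2$ by~\eqref{eq:abv2} (which in turn rests on $v-2\ge(k-1)(k-2)$ from Lemma~\ref{lm:vk}).

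The one genuinely delicate step is the passage from ``$(v-1)(v-2)\mid f\,k(k-1)(k-2)\,d\,e_\gamma$ for every $\gamma$'' to the same statement with $e_\gamma$ replaced by $d-1$: the crude bound $e_\gamma\le d-1$ is not sufficient, and one must instead partition $\Delta\setminus\{\beta\}$ into $T_{\alpha\beta}$-orbits and add up the lengths. A second point requiring care is that $|G_B|$, and not $|T_B|$, has to be used throughout; this is legitimate precisely because $G$ --- though possibly not $T$ --- is block-transitive, so that $|G_B|$ does not depend on the block $B$, and it is exactly this feature that makes the lemma convenient in computations.
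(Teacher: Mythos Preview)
Your proof is correct, and for part~(a) it takes a genuinely different route from the paper's. The paper fixes only $\alpha$ and double-counts the pairs $(B,\{\beta,\gamma\})$ with $B\in\mathcal B(\alpha)$ and $\{\beta,\gamma\}\subseteq B\cap\Gamma$: one side gives $\binom{d}{2}$, while on the other each $T_\alpha$-orbit $O_i$ on $\mathcal B(\alpha)$ contributes $|O_i|\binom{\mu_i}{2}$ with $\mu_i=|B_i\cap\Gamma|$, and the key identity $|O_i|=\dfrac{(v-1)(v-2)}{fk(k-1)(k-2)}\cdot\dfrac{|G_{B_i}|}{|T_{\alpha B_i}|}$ exhibits the required factor as an integer multiplier. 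Your argument instead fixes an additional point $\beta\in\Delta$ and works with three-point stabilizers: the inclusion $T_{\alpha\beta\gamma}\le G_{B_\gamma}$ together with orbit--stabilizer immediately yields $(v-1)(v-2)\mid fk(k-1)(k-2)\,d\,e_\gamma$ for every $\gamma$, and you then recover $d-1$ by summing the $T_{\alpha\beta}$-orbit lengths on $\Delta\setminus\{\beta\}$ via the gcd trick. Your approach is a bit more elementary in that it bypasses the analysis of $T_\alpha$-orbits on $\mathcal B(\alpha)$ and the computation of their lengths; the paper's double count, in return, produces a single explicit identity rather than a family of divisibilities that must be combined. For part~(b) the two arguments coincide: both pass from $(v-1)(v-2)\mid fk(k-1)(k-2)\,d(d-1)$ to $(v-1)(v-2)\mid fk(k-1)(k-2)\gcd(d(d-1),(v-1)(v-2))$ and then invoke~\eqref{eq:abv2}.
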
 

\begin{proof}  
Since $T$ is a normal subgroup of $G$ and $G$ acts transitively on $\mathcal{B}$, we see that the orbits of $T$ on $\mathcal{B}$ have the same length, and so $ \vert T_B \vert = \vert T_C \vert $ for every $B,C\in \mathcal{B}$.
Let $s$ be the number of orbits of $T$ on $\mathcal{B}$.
Then, for every $B\in \mathcal{B}$, we have $s \vert T \vert / \vert T_B \vert = \vert \mathcal{B} \vert $ and so
\[
 \vert T_B \vert =\frac{s \vert T \vert }{ \vert \mathcal{B} \vert }=\frac{s \vert G \vert /f}{ \vert G \vert / \vert G_B \vert }=\frac{s \vert G_B \vert }{f}. 
\]
Since both $G$ and $T$ are transitive on $\mathcal{P}$, we see that $ \vert G \vert / \vert G_\alpha \vert =v= \vert T \vert / \vert T_\alpha \vert $ for every $\alpha \in \mathcal{P}$, and so 
\[
 \vert T_\alpha \vert =\frac{ \vert T \vert }{v}=\frac{ \vert T \vert }{ \vert G \vert / \vert G_\alpha \vert }= \frac{ \vert G_\alpha \vert }{f}.
\]
It follows from Lemma~\ref{lm:akbv}(a) that 
\[
\frac{ \vert T_\alpha \vert }{ \vert T_B \vert }=\frac{ \vert G_\alpha \vert /f }{s \vert G_B \vert /f}=\frac{(v-1)(v-2)}{sk(k-1)(k-2)} \text{ for every } \alpha \in \mathcal{P},\ B \in \mathcal{B}.
\]  

Let $\alpha \in \mathcal{P}$ and let $ \Gamma$ be a $T_\alpha$-orbit on $\mathcal{P}$ such that $ \vert \Gamma \vert =d$. 
Let $\mathcal{B}(\alpha)=\{B: B \in \mathcal{B} \mid \alpha \in B \}$, that is, the set of blocks containing $\alpha$.
Let $ O_1,\ldots, O_m$ be all orbits of $T_\alpha$ on the set $\mathcal{B}(\alpha)$.
For every $i\in \{1,\ldots,m \}$ and for every two blocks $B,C \in O_i$, there is some $g\in T_\alpha$ such that $B^g=C$, which implies
\[
 \vert C\cap \Gamma \vert = \vert B^g\cap \Gamma^g \vert = \vert (B\cap \Gamma)^g \vert = \vert B\cap \Gamma \vert .
\]
For every $i\in \{1,\ldots,m\}$, let $B_i$ be a representation of $O_i$  and let  $\mu_i= \vert B_i \cap \Gamma \vert $.
Then 
\begin{align*}
 \vert O_i \vert &=\frac{ \vert T_\alpha \vert }{ \vert T_{\alpha B_i} \vert }=\frac{ \vert T_\alpha \vert }{ \vert T_{B_i} \vert }\cdot \frac{ \vert T_{B_i} \vert }{ \vert T_{\alpha B_i} \vert }=\frac{(v-1)(v-2)}{sk(k-1)(k-2)}\cdot \frac{ \vert T_{B_i} \vert }{ \vert T_{\alpha B_i} \vert }=\frac{(v-1)(v-2)}{k(k-1)(k-2)}\cdot \frac{ \vert T_{B_i} \vert }{s} \cdot \frac{1}{ \vert T_{\alpha B_i} \vert },\\
&=\frac{(v-1)(v-2)}{k(k-1)(k-2)}\cdot \frac{ \vert G_{B_i} \vert }{f} \cdot \frac{1}{ \vert T_{\alpha B_i} \vert }=\frac{(v-1)(v-2)}{fk(k-1)(k-2)} \cdot \frac{ \vert G_{B_i} \vert }{ \vert T_{\alpha B_i} \vert },
\end{align*}  
where $T_{\alpha B_i}=T_{\alpha } \cap T_{ B_i}$.

To prove part (a), we count the size of the set 
\[R:=\{ (B,\{\beta,\gamma\}) : B \in \mathcal{B}(\alpha),\beta,\gamma \in \Gamma \mid \beta,\gamma\in B,\ \beta \neq \gamma \}.\] 
On the one hand, for a given $2$-subset $\{\beta,\gamma\}$ of $\Gamma$, there is a unique block $B\in \mathcal{B}(\alpha)$ such that $\beta,\gamma\in B $ (by the definition of $3$-$(v,k,1)$ design), and this implies that $ \vert R \vert =d(d-1)/2$.
On the other hand, for a given $B \in \mathcal{B}(\alpha)$, we may assume that $B\in O_i$, and then there are $\mu_i(\mu_i-1)/2$ choices for $\{\beta,\gamma\} \subseteq B\cap \Gamma$.
It follows that
\[
\frac{d(d-1)}{2}= \vert R \vert =\sum_{i=1}^{m} \left(\frac{\mu_i(\mu_i-1)}{2}\cdot  \vert O_i \vert  \right)=\frac{(v-1)(v-2)}{2fk(k-1)(k-2)} \cdot \sum_{i=1}^{m} \left( \mu_i(\mu_i-1)\frac{ \vert G_{B_i} \vert }{ \vert T_{\alpha B_i} \vert } \right).
\]
Notice that $ \vert G_{B_i} \vert / \vert T_{\alpha B_i} \vert $ is an integer, since $T_{\alpha B_i}=T_\alpha \cap T_{B_i}$ is a subgroup of $G_{B_i}$.
Therefore, part (a) holds.


From part (a), we conclude that $(v-1)(v-2)$ divides $fk(k-1)(k-2)\cdot  \mathrm{gcd}(d(d-1),(v-1)(v-2))$.
This together Lemma~\ref{lm:akbv}(a)  imply that 
\[
\frac{ \vert G_\alpha \vert }{ \vert G_B \vert }=\frac{(v-1)(v-2)}{k(k-1)(k-2)} \leq f\cdot \mathrm{gcd}(d(d-1),(v-1)(v-2)).
\]
Then part (b) follows from Lemma~\ref{lm:akbv}(b),(c). 
\end{proof}
 
%
To end this section, we state the classification of large maximal subgroups of almost simple exceptional groups of Lie type.

\begin{theorem}[{\rm \cite[Theorem~1.2]{ABD2022}}]\label{th:largeAS}
Let $G$ be a finite almost simple group whose socle $T$ is a finite simple exceptional group of Lie type, and let $H$ be a maximal subgroup of $G$ not containing $T$. If $H$ is a large subgroup of $G$, then $H$ is either parabolic, or one of the subgroups listed in Table~\ref{tb:largesubgroups}.
\end{theorem}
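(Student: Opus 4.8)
The plan is to invoke the known structural description of the maximal subgroups of an almost simple exceptional group $G$ of Lie type with socle $T$, and then to bound $|H|$ family by family, comparing $|H|^{3}$ with $|G|$ --- exactly the strategy Alavi and Burness~\cite{AB2015} carried out for the classical groups. By the reduction theorem of Liebeck and Seitz (together with the earlier work of Borovik and the analysis of reductive subgroups of maximal rank), a maximal subgroup $H$ of $G$ not containing $T$ falls into one of the following classes: (i) parabolic subgroups; (ii) normalizers $N_G(D)$ of maximal-rank subgroups $D$ (torus normalizers and reductive subgroups of full rank); (iii) a finite, explicitly listed family of exotic local subgroups $N_G(E)$ with $E$ elementary abelian; (iv) subfield subgroups and normalizers of fixed-point subgroups of field, graph, or graph-field automorphisms --- groups of the form $N_G(T(q_0))$ with $q=q_0^{r}$ and $r$ prime, together with the Suzuki- and Ree-type subgroups; and (v) all remaining maximal subgroups, which are either of ``generic'' reductive type of bounded rank or almost simple of class $\mathcal{S}$, and for which Liebeck and Seitz supply a bound $|H|\le q^{c(T)}$ with $c(T)$ small relative to $\dim T$. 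I would go through (i)--(v) in turn.

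For class (i) one checks directly that $|P|^{3}\ge |G|$ for every parabolic $P$; equivalently, the flag variety $G/P$ has dimension at most $\tfrac{2}{3}\dim T$, the extreme case being the Borel, and this is verified type by type from the list of positive-root counts. This accounts for the ``$H$ is parabolic'' alternative. For class (iv), $|T(q_0)|$ is a polynomial in $q_0$ of degree $\dim T$, hence $|H|$ is of order $q^{\dim T/r}$ up to the bounded outer factor $f=|G|/|T|$, which is at most a small constant times $\log_p q$; so $|H|^{3}\ge |G|$ forces $r\in\{2,3\}$, and for the borderline $r=3$ one passes from leading terms to the exact orders and the precise value of $f$ (and the precise contribution of graph and field automorphisms normalizing $T(q_0)$) to decide which cases survive. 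For class (ii) the orders $|N_G(D)|$ are explicit polynomials in $q$; comparing their degrees against $\tfrac{1}{3}\dim T$ produces a short list of candidates, which are then confirmed --- or eliminated at small $q$ --- by exact computation. For classes (iii) and (v) the order is at most $q^{c}$ with $c<\tfrac{1}{3}\dim T$, so $H$ cannot be large once $q$ exceeds an explicit threshold depending only on $T$; the finitely many remaining pairs $(T,q)$ are settled individually using the known classifications of the maximal subgroups of the small exceptional groups (for ${}^2B_2(q)$, ${}^2G_2(q)$, ${}^2F_4(q)$, ${}^3D_4(q)$, $G_2(q)$, $F_4(q)$, and so on) and, for the smallest fields, the Atlas. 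Collecting the surviving non-parabolic subgroups yields Table~\ref{tb:largesubgroups}.

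The difficulty is not a single hard step but the borderline bookkeeping. The generic degree comparison is immediate, but near the threshold --- the $r=3$ subfield subgroups and the maximal-rank subgroups whose order polynomial has degree close to $\tfrac{1}{3}\dim T$ --- leading-term estimates are inconclusive and one must handle exact group orders and the exact action of the outer automorphism group. At the same time the Liebeck--Seitz bound for class (v) only becomes decisive for $q$ above a type-dependent constant, forcing a finite but substantial case analysis over small fields in which ensuring completeness of the maximal-subgroup lists, and correctly identifying the subgroups that are simultaneously maximal in $G$ and large in $G$, is the delicate point. Assembling all of this into one table with neither omissions nor spurious entries is where the real work lies.
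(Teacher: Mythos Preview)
The paper does not prove this statement at all: Theorem~\ref{th:largeAS} is quoted verbatim from Alavi, Bayat and Daneshkhah~\cite[Theorem~1.2]{ABD2022} and is used as a black box to supply the list of candidates for $(T,T_\alpha)$. There is nothing in the present paper to compare your argument against.

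Your sketch is a reasonable outline of how the cited paper~\cite{ABD2022} (building on Alavi--Burness~\cite{AB2015} and the Liebeck--Seitz structure theorem) actually obtains the classification, and the difficulties you identify --- the borderline $r=3$ subfield cases, the maximal-rank subgroups near the $\tfrac{1}{3}\dim T$ threshold, and the small-$q$ case checks --- are indeed where the work in~\cite{ABD2022} lies. But for the purposes of the present paper no such argument is needed or given; the result is simply imported. If you intend to reproduce the proof, you should be aware that what you have written is a high-level plan rather than a proof: the ``short list'' in class~(ii), the exact thresholds in class~(v), and the small-field verifications all require the detailed case analysis carried out in~\cite{ABD2022}, and your proposal does not yet contain any of that content.
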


\begin{table}[ht]
\caption{Large maximal non-parabolic subgroups  of almost simple exceptional groups  of Lie type.}\label{tb:largesubgroups}%
\begin{tabular}{@{}lll@{}}
\toprule 
$T$ & $H \cap T $ or type of $H$  & Conditions  \\
\midrule
${}^2B_2(q)\  (q=2^{2n+1}\geq 8) $ & $ (q+\sqrt{2q}+1):4 $ & $ q=8,32$  \\
  &  ${}^2B_2(q^{1/3}) $ & $q\geq 8$, $3\mid 2n+1$  \\
  
${}^2G_2(q)\  (q=3^{2n+1}\geq 27) $ & $ A_1(q) $ &      \\
  &  ${}^2B_2(q^{1/3}) $ & $3 \mid 2n+1$ \\

${}^3D_4(q) $ & $ A_1(q^3)A_1(q)$, $(q^2+ \epsilon q+1)A_2^{\epsilon}(q)$, ${}^3D_4(q^{1/2})$, $G_2(q)$ & $ \epsilon=\pm$  \\
&  ${}^3D_4(q^{1/2}) $ & $q$ square  \\
&  $7^2:\mathrm{SL}_2(3) $ & $q=2$  \\

${}^2F_4(q)'\  (q=2^{2n+1}\geq 2) $ & ${}^2B_2(q)\wr 2$, $B_2(q): 2$, ${}^2F_4(q^{1/3}) $ &    \\
&  $ \mathrm{SU}_3(q):2 $, $\mathrm{PGU}_3(q):2$ & $q=8$  \\  
&  $ \mathrm{PSL}_3(3):2 $, $\mathrm{PSL}_2(25) $, $\mathrm{A}_6.2^2$, $5^2:4\mathrm{A}_4$ & $q=2$  \\      
 $G_2(q)$ ~$(q\geq 3)$ & $ A_2^{\pm}(q)$, $A_1(q)^2$, $G_2(q^{1/r})$  & $r=2,3$   \\
 &  ${}^2G_2(q) $ & $q=3^a$, $a$ is odd  \\  
 &  $G_2(2) $ & $q=5,7$   \\  
 &  $A_1(13) $, $\mathrm{J}_2$ & $q=4$   \\  
  &  $2^3.\mathrm{SL}_3(2) $ & $q=3,5$   \\

$F_4(q)$ & $B_4(q)$, $D_4(q)$, ${}^3D_4(q)$, $F_4(q^{1/r}) $ & $r=2,3$ \\
 
 & $A_1(q)C_3(q)$ & $ p\neq 2$ \\

 & $C_4(q)$, $C_2(q^2)$, $C_2(q)^2$ & $ p= 2 $\\
 
  & ${}^2F_4(q)  $ & $ q=2^{2n+1}\geq 2$ \\

 & $A_1(q)G_2(q)$  & $ q>3  $ odd\\ 
 
 & ${}^3D_4(2)  $ & $ q=3 $\\
 
 & $\mathrm{A}_9$, $\mathrm{A}_{10}$, $\mathrm{PSL}_4(3).2 $, $\mathrm{J}_2$ & $ q=2 $\\

& $\mathrm{S}_6\wr \mathrm{S}_2$   & $ q =2$\\  

$E_6^{\epsilon}(q)$ & $ A_1(q)A_5^{\epsilon}(q)$, $F_4(q)$ &   \\
 
& $(q-\epsilon)D_5^{\epsilon} $ & $\epsilon=-$ \\
 
& $C_4(q) $ & $ p\neq 2$\\
 
& $E_6^{\pm}(q^{1/2}) $ & $ \epsilon=+$\\
 
& $ E_6^{\epsilon}(q^{1/3}) $ &  \\
 
& $ (q-\epsilon)^2. D_4(q) $ & $ (\epsilon,q)\neq (+,2)$\\
 
& $(q^2+\epsilon q+1).{}^3D_4(q) $ & $ (\epsilon,q)\neq (-,2)$\\
 
& $\mathrm{J}_{3}$, $\mathrm{A}_{12}$, $B_3(3)$, $\mathrm{Fi}_{22} $ & $ (\epsilon,q)= (-,2)$\\

$E_7(q)$& $(q-\epsilon)E_6^{\epsilon}(q)$, $A_1(q)D_6(q)$, $A_7^{\epsilon}(q)$, $A_1(q)F_4(q)$, $E_7(q^{1/r})$
& $\epsilon=\pm$ and $r=2,3$ \\

&   $\mathrm{Fi}_{22} $ &  $q=2$\\
    
$E_8(q)$& $A_1(q)E_7(q)$, $D_8(q)$, $A_2^{\epsilon}(q)E_6^{\epsilon}(q)$, $E_8(q^{1/r})$ & $\epsilon=\pm $ and $r=2,3$\\
  
\botrule 
\end{tabular} 
\end{table}

\begin{remark}\label{remark1}
\begin{enumerate}[\rm (a)] 
\item For the orders of simple groups of Lie type, see,  for example,~\cite[Tables~5.1.A--5.1.B]{K-Lie}.
 
 \item For the definition and  group structure of parabolic subgroup, one may refer to~\cite[Definition~2.6.4~and~Theorem~2.6.5]{CFSG}.

\item In Table~\ref{tb:largesubgroups}, the type of $H$  is an approximate description of the group-theoretic structure of $H$, and for precise structure, one may refer to~\cite[Tables~8.30,~8.41~and~8.42]{BHRD2013} for $G_2(q)$, and~\cite[Tables~5.1]{CLS1992} for $F_4(q)$, $E_6(q)$ and ${}^2E_6(q)$, $E_7(q)$ and $E_8(q)$. 

\item According to~\cite[p.303]{BAtlas}, the list of maximal subgroups of $F_4(2)$ in Atlas~\cite{Atlas} is complete.
Hence, the candidates $\mathrm{A}_9$, $\mathrm{A}_{10}$ and $\mathrm{J}_2$ for $F_4(2)$ in Table~\ref{tb:largesubgroups} do not happen. 
\end{enumerate} 
\end{remark}  

\smallskip


\section{Proof of Theorem~\ref{th:exceptional}} \label{sec:proof}
Throughout this section, we  make the following hypothesis.

\begin{hypothesis}\label{hy:1}
Let $G$ be an almost simple group whose  socle $T$ is a simple exceptional group of Lie type, and  let $\mathcal{D}=(\mathcal{P},\mathcal{B})$ be a non-trivial $G$-block-transitive $3$-$(v,k,1)$ design. 
Let  $\alpha $ be a point in $\mathcal{P}$ and let $B$ be a block in $\mathcal{B}$.
Let $q=p^e$, where $p$ is a prime and $e$ is an positive integer.
Let $G=T.\mathcal{O}$, where $\mathcal{O}\leq \mathrm{Out} (T)$.
\end{hypothesis}

Since $\mathcal{D}$ is non-trivial, we have $3  <k<v$.
By Lemma~\ref{lm:VP},  $G$ acts primitively on $\mathcal{P}$.
Thus,  $G_\alpha$ is a core-free  maximal subgroup of $G$ and  the normal subgroup $T$ is transitive on $\mathcal{P}$.
From the transitivity of $T$, we have  $v= \vert G \vert / \vert G_\alpha \vert = \vert T \vert / \vert T_\alpha \vert $
 and $G_\alpha=T_\alpha.\mathcal{O}$.

\begin{lemma}\label{lm:Ga3>G}
$ \vert G_\alpha \vert ^3> \vert G \vert $.
\end{lemma}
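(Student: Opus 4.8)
The plan is to deduce this from Lemma~\ref{lm:akbv}(b), which already gives the slightly weaker inequality $\vert G_\alpha \vert (\vert G_\alpha \vert / \vert G_B \vert + 2)^2 > \vert G \vert$. Writing $x = \vert G_\alpha \vert / \vert G_B \vert$, this says $\vert G_\alpha \vert (x+2)^2 > \vert G \vert$, so it suffices to show $(x+2)^2 \le \vert G_\alpha \vert^2$, i.e. $x + 2 \le \vert G_\alpha \vert$, in order to conclude $\vert G_\alpha \vert^3 > \vert G \vert$. Since $\vert G_B \vert \ge 1$ we have $x \le \vert G_\alpha \vert$, so what I really need is to rule out the borderline situations where $x$ is very close to $\vert G_\alpha \vert$ — that is, where $\vert G_B \vert$ is $1$ or $2$ and $\vert G_\alpha \vert$ is tiny. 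So the first step is to record that $x + 2 \le \vert G_\alpha \vert$ holds automatically as soon as $\vert G_B \vert \ge 3$, or as soon as $\vert G_B \vert = 1,2$ but $\vert G_\alpha \vert$ is not among a short list of small values; then the whole statement follows from Lemma~\ref{lm:akbv}(b).

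Next I would dispose of the remaining tiny cases. Here I would use the structural information: $G$ is almost simple with socle $T$ a simple exceptional group of Lie type, so $\vert T \vert$ is enormous and in particular $\vert G \vert$ is divisible by a large power of the defining prime $p$; meanwhile $G_\alpha = T_\alpha.\mathcal{O}$ is a maximal subgroup, and $v = \vert G \vert / \vert G_\alpha \vert$. If $\vert G_\alpha \vert$ were as small as a bounded constant, then $v \ge \vert G \vert / c$ would be astronomically large, and by Lemma~\ref{lm:vk} we would need $k < \sqrt v + 2$, while by Lemma~\ref{lm:akbv}(c) we would need $\mathrm{gcd}(\vert G_\alpha \vert, (v-1)(v-2)) > \sqrt v - 2$; the latter is impossible once $\vert G_\alpha \vert$ is bounded but $\sqrt v$ is not. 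This contradiction eliminates the borderline cases, completing the proof.

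The main obstacle — such as it is — is bookkeeping the passage from the cube-root-with-a-``$+2$''-slack inequality of Lemma~\ref{lm:akbv}(b) to the clean inequality $\vert G_\alpha \vert^3 > \vert G \vert$: one must be careful that the slack term $2$ is genuinely absorbed, and the only way it can fail is when $G_B$ and $G_\alpha$ are both extremely small, which the size of an exceptional group of Lie type forbids via Lemma~\ref{lm:akbv}(c). I expect the actual argument in the paper to be a one- or two-line citation of Lemma~\ref{lm:akbv}(b),(c) together with the trivial observation $\vert G_B\vert\ge 1$, so the ``hard part'' is really just making sure no off-by-a-constant case slips through; there is no deep difficulty here.
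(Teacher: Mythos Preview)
Your reduction to Lemma~\ref{lm:akbv}(b) is correct, and setting $x = |G_\alpha|/|G_B|$ with the goal of proving $x+2 \le |G_\alpha|$ is exactly the right move. But there is a genuine gap in your case analysis.

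You claim that ``$x+2 \le |G_\alpha|$ holds automatically as soon as $|G_B| = 1,2$ but $|G_\alpha|$ is not among a short list of small values.'' This is false when $|G_B|=1$: in that case $x = |G_\alpha|$ exactly, so $x+2 > |G_\alpha|$ \emph{regardless of how large $|G_\alpha|$ is}. The borderline case $|G_B|=1$ is therefore not a ``tiny $|G_\alpha|$'' problem at all, and your appeal to Lemma~\ref{lm:akbv}(c) to bound $|G_\alpha|$ from below does not touch it. Your expectation that ``the trivial observation $|G_B|\ge 1$'' suffices is precisely where the argument breaks.

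The paper's proof supplies the missing ingredient: it shows directly that $|G_B| \ge 2$. The idea is to exhibit a nontrivial element of $G$ stabilizing some block. When $3$ divides $|T|$ (every exceptional group except ${}^2B_2(q)$), take $g\in T$ of order~$3$; for any $\beta$ moved by $g$, the $3$-set $\{\beta,\beta^g,\beta^{g^2}\}$ lies in a unique block, which is then fixed by $\langle g\rangle$, giving $|G_B|\ge 3$. For $T={}^2B_2(q)$ one uses an involution in $T_\alpha$ and the $3$-set $\{\alpha,\beta,\beta^g\}$ to get $|G_B|\ge 2$. Once $|G_B|\ge 2$ is in hand, one has $x \le |G_\alpha|/2$, and then $x+2 < |G_\alpha|$ as soon as $|G_\alpha|\ge 5$; the residual case $|G_\alpha|\le 4$ forces $|G|<64$, which is easily excluded. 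So the ``hard part'' is not bookkeeping but the block-stabilizer argument that rules out $|G_B|=1$.
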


\begin{proof}
Recall Lemma~\ref{lm:akbv}(b), that is, $  \vert G_{\alpha} \vert (  \vert G_{\alpha} \vert / \vert G_{B} \vert  +2)^2> \vert G \vert $. 

We claim that $ \vert G_B \vert \geq 2$. Assume at first that $T \neq  {}^2B_2(q)$. Then $ \vert T \vert $ is divisible by $(q+1)q(q-1)$ (see~\cite[Table~5.1.B]{K-Lie} for example) and hence by $3$. 
Let $g \in T$ be of order $3$. 
Since $G$ acts faithfully on $\mathcal{P}$, we see that there exists some $\beta \in \mathcal{P}$ which is not fixed by $g$. 
Then $\langle g\rangle$ fixes the $3$-subset  $\{\beta,\beta^g,\beta^{g^2} \}$ of $\mathcal{P}$ and hence fixes the unique block  containing $\{\beta,\beta^g,\beta^{g^2} \}$ by the definition of $3$-$(v,k,1)$ design.
It then follows from the block-transitivity of $G$ that $ \vert G_B \vert \geq 3 $. 
Assume now that $T ={}^2B_2(q)$. From~\cite[Table~8.16]{BHRD2013} we see that $ \vert T_\alpha \vert $ is divisible by $2$.
Let $g\in T_\alpha$ be of order $2$ and let $\beta \in \mathcal{P}\setminus \{ \alpha\}$  which is not fixed by $g$. 
Then $g$ fixes the unique block containing $\{ \alpha,\beta,\beta^g\}$, and hence $ \vert G_B \vert \geq 2$. Therefore, the claim holds.

Suppose that $ \vert G_\alpha \vert \leq 4$. Since $ \vert G_B \vert \geq 2$, we have  $ \vert G \vert < \vert G_\alpha \vert (  \vert G_{\alpha} \vert / \vert G_{B} \vert  +2)^2<4^3=64$, which implies $G=\mathrm{A}_5$. 
Notice that the maximal subgroups of $G=\mathrm{A}_5$ are $\mathrm{A}_4 $, $\mathrm{S}_3 $ and $\mathrm{D}_{10} $. 
Since $G_\alpha$ is maximal in $G$, we have $ \vert G_\alpha \vert \geq 6$.
Now $ \vert G_{\alpha} \vert / \vert G_{B} \vert  \leq   \vert G_{\alpha} \vert /2 $ and $2<  \vert G_{\alpha} \vert /2 $.
Then $   \vert G_{\alpha} \vert / \vert G_{B} \vert  +2 < \vert G_{\alpha} \vert  $, and the lemma follows.  
\end{proof}

We remark that Lemma~\ref{lm:Ga3>G} holds for all almost simple groups, since the Suzuki groups ${}^2B_2(q)$ are the only non-abelian simple groups whose order is not divisible by $3$.

\medskip

By Lemma~\ref{lm:Ga3>G},  all candidates for $(T,T_\alpha)$  have been determined in Theorem~\ref{th:largeAS}.

\subsection{Computation methods}\label{sec:methods}

Our methods based on the  following lemma, which slightly improves  Lemma~\ref{lm:akbv}(c).
Recall that $ \vert G_\alpha \vert = \vert T_\alpha \vert \cdot \vert \mathcal{O} \vert $.

\begin{lemma}\label{lm:XGCD}
$\mathrm{gcd}( \vert T_\alpha \vert ,(v-1)(v-2)) \vert \mathcal{O} \vert  >\sqrt{v}-2$.
\end{lemma}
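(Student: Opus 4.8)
The plan is to deduce this directly from Lemma~\ref{lm:akbv}(c), which already supplies $\mathrm{gcd}(\vert G_\alpha\vert,(v-1)(v-2))>\sqrt{v}-2$, by replacing $\mathrm{gcd}(\vert G_\alpha\vert,(v-1)(v-2))$ with a quantity built only from $\vert T_\alpha\vert$ and $\vert\mathcal{O}\vert$. Recall from Hypothesis~\ref{hy:1} and the discussion following it that $G_\alpha=T_\alpha.\mathcal{O}$, so that $\vert G_\alpha\vert=\vert T_\alpha\vert\cdot\vert\mathcal{O}\vert$; thus the task is simply to compare $\mathrm{gcd}(\vert T_\alpha\vert\cdot\vert\mathcal{O}\vert,(v-1)(v-2))$ with $\mathrm{gcd}(\vert T_\alpha\vert,(v-1)(v-2))\cdot\vert\mathcal{O}\vert$.

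First I would record the elementary number-theoretic fact that for any positive integers $a$, $b$ and $N$ the integer $\mathrm{gcd}(ab,N)$ divides $\mathrm{gcd}(a,N)\cdot b$, and in particular $\mathrm{gcd}(ab,N)\le \mathrm{gcd}(a,N)\cdot b$. This is checked prime by prime: for a prime $r$, writing $v_r$ for the $r$-adic valuation, one has
\[
v_r\bigl(\mathrm{gcd}(ab,N)\bigr)=\min\{v_r(a)+v_r(b),\,v_r(N)\}\le \min\{v_r(a),\,v_r(N)\}+v_r(b)=v_r\bigl(\mathrm{gcd}(a,N)\cdot b\bigr).
\]
Applying this with $a=\vert T_\alpha\vert$, $b=\vert\mathcal{O}\vert$ and $N=(v-1)(v-2)$ gives
\[
\mathrm{gcd}\bigl(\vert G_\alpha\vert,(v-1)(v-2)\bigr)\le \mathrm{gcd}\bigl(\vert T_\alpha\vert,(v-1)(v-2)\bigr)\cdot\vert\mathcal{O}\vert .
\]
Combining this with Lemma~\ref{lm:akbv}(c) then yields $\mathrm{gcd}(\vert T_\alpha\vert,(v-1)(v-2))\cdot\vert\mathcal{O}\vert>\sqrt{v}-2$, as required.

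There is essentially no obstacle here: the entire content of the lemma is Lemma~\ref{lm:akbv}(c) together with the valuation estimate above. The only point requiring mild care is not to overclaim: in general $\mathrm{gcd}(\vert T_\alpha\vert,(v-1)(v-2))\cdot\vert\mathcal{O}\vert$ may be strictly larger than $\mathrm{gcd}(\vert G_\alpha\vert,(v-1)(v-2))$, so the statement is genuinely a (slight) weakening of Lemma~\ref{lm:akbv}(c); the gain for later use is that the left-hand side can be evaluated from $\vert T_\alpha\vert$ alone, independently of which group $G$ with $T\le G\le\mathrm{Aut}(T)$ one is considering.
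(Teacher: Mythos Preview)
Your proof is correct and matches the paper's approach exactly. The paper does not write out a proof of this lemma but simply prefaces it with the remark that it follows from Lemma~\ref{lm:akbv}(c) together with $\vert G_\alpha\vert=\vert T_\alpha\vert\cdot\vert\mathcal{O}\vert$; your argument spells out precisely the elementary gcd estimate needed to pass from one to the other.
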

\smallskip

Lemma~\ref{lm:XGCD} can help us quickly test whether a candidate for $(T,T_\alpha)$ is feasible.  
For example, let $T=G_2(q)$ and let $T_\alpha =[q^5]\, {: }\,\mathrm{GL}_2(q)$ be a maximal parabolic subgroup. 
Then $ \vert T \vert =q^6(q^6-1)(q^2-1)$, and $  \vert \mathcal{O} \vert $ divides $e$, and $ \vert T_\alpha \vert =q^5 \vert \mathrm{GL}_2(q) \vert =q^6(q^2-1)(q-1)$, and  $v= \vert G \vert / \vert G_\alpha \vert =(q^6-1)/(q-1)$, and so 
\[
(v-1)(v-2)=q^{10} + 2q^9 + 3q^8 + 4q^7 + 5q^6 + 3q^5 + 2q^4 + q^3 - q.
\]
View $ \vert T_\alpha \vert $  and $(v-1)(v-2)$ as polynomials in $q$ over the rational field. 
Using the {\sc Magma}~\cite{Magma} command \textsf{XGCD} (or the  {\sc Gap}~\cite{GAP4} command \textsf{GcdRepresentation}), we can obtain  three rational polynomials  $R(q)$, $P(q)$, $Q(q)$ such that 
\[P(q) \vert T_\alpha \vert +Q(q)(v-1)(v-2)=R(q).\]
For this example, computation in {\sc Magma}~\cite{Magma} shows that 
\begin{align*}
R(q)&=q,\\ 
P(q)&=\frac{357}{32}q^8 + \frac{4431}{160}q^7 + \frac{3787}{80}q^6 + \frac{1093}{16}q^5 + \frac{2891}{32}q^4 + \frac{1267}{16}q^3 + \frac{1261}{20}q^2 + \frac{6913}{160}q + \frac{3589}{160},\\
Q(q)&=-\frac{357}{32}q^7 + \frac{231}{40}q^6 + \frac{2149}{160}q^5 - 4q^4 - 2q^3 - q^2 - 1.
\end{align*}
Notice that the least common multiple of the denominators of all coefficients of $P(q)$ and $R(q)$ is $160$.
Therefore,
\[R_1(q)=P_1(q) \vert T_\alpha \vert +Q_1(q)(v-1)(v-2),\]
 where $R_1(q)=160\cdot R(q)$, $P_1(q)=160\cdot P(q)$ and $Q_1(q)=160\cdot Q(q)$.
 This implies  
 \[
 \mathrm{gcd}( \vert T_\alpha \vert ,(v-1)(v-2)) \mid R_1(q)=160q.
 \] 
To obtain $R_1(q),P_1(q),Q_1(q)$  above, one may use the following {\sc Magma}~\cite{Magma} code:
\begin{verbatim}
F<q>:=PolynomialRing(RationalField());  
Ta:=q^6*(q^2-1)*(q-1); v:=(q^6-1) div (q-1);
R,P,Q:=XGCD(Ta,(v-1)*(v-2));
coes:={i: i in Eltseq(P)} join {i: i in Eltseq(Q)};
dens:={Denominator(i): i in coes}; 
R1:=LCM(dens)*R; P1:=LCM(dens)*P;  Q1:=LCM(dens)*Q;   
\end{verbatim} 
Then, from Lemma~\ref{lm:XGCD} we conclude that
\begin{align*}
160q\log_p(q)>(q^5 + q^4 + q^3 + q^2 + q + 1)^{\frac{1}{2}}-2.
\end{align*} 
This is an inequality on $q$, and one can show that it holds only for some $q\leq 128$.


\vspace{3mm}

For a certain  $(T,T_\alpha)$, one may check whether  exists a non-trivial $G$-block-transitive $3$-$(v,k,1)$ design by the following steps.
\begin{enumerate}[\rm (i)]
\item Check whether 
\begin{equation}\label{eq:k-1}
\mathrm{gcd}( \vert T_\alpha \vert ,(v-1)(v-2)) \vert \mathrm{Out} (T) \vert >\sqrt{v}-2.
\end{equation}
\item Find divisors of $v-2$ (one may use the  {\sc Magma} command \textsf{Divisors}), which gives candidates for $k-2$ as
\begin{equation}
\label{eq:k0}
(k-2) \mid (v-2),
\end{equation} 
 by Lemma~\ref{lm:brlambda2}(c).
\item For every candidates for $k$, check whether the following conditions are satisfied: 
\begin{align} 
\label{eq:k1}
&k < \sqrt{v}+2,\\
\label{eq:k2}
&(k-1)(k-2) \mid (v-1)(v-2),\\ 
\label{eq:k3}
&k(k-1)(k-2) \mid v(v-1)(v-2),\\   
\label{eq:k4}
&(v-1)(v-2) \mid  k(k-1)(k-2) \vert T_\alpha \vert \cdot  \vert \mathrm{Out} (T) \vert .
\end{align}
Note that ~\eqref{eq:k1} is due to Lemma~\ref{lm:vk}, and ~\eqref{eq:k2} and \eqref{eq:k3} are by Lemma~\ref{lm:brlambda2}, and ~\eqref{eq:k4} is a consequence of Lemma~\ref{lm:akbv}(a). 

\item  For a possible $k$ satisfying ~\eqref{eq:k0}--\eqref{eq:k4}, we compute candidates for $G_B$, and the orbits of $G_{B}$ on $\mathcal{P}$ (note that $B$ consists of some orbits of $G_B$).
For every candidate for  block $C$, it holds that $C=\mathit{\Omega}_1 \cup \cdots \cup  \mathit{\Omega}_m$, where $\mathit{\Omega}_i$ for $i\in \{1,\ldots,m\}$ are $G_{B}$-orbits such that $k= \vert \mathit{\Omega}_1 \vert +\cdots + \vert  \mathit{\Omega}_m \vert $. 
If the stabilizer of $C$ in $G$ has order $ \vert G_{B} \vert $ (which implies that $ \vert C^G \vert = \vert \mathcal{B} \vert $), and, for every two distinct points $\beta,\gamma \in \mathcal{P}\setminus \{\alpha\}$, the $3$-subset $\{\alpha,\beta,\gamma \}$ is contained in exactly one block in $C^G$ (notice that $G$ is transitive on $\mathcal{P}$), then we find  a $G$-block-transitive $3$-$(v,k,1)$ design. 
\end{enumerate}
 
\subsection{Some special candidates}   
Recall that all  candidates for $(T,T_\alpha)$ has been determined by~Theorem~\ref{th:largeAS}, except for $T= G_2(2)'$ and ${}^2G_2(3)'$. 
We deal with some special candidates in this subsection.
 
Firstly, it is convenience to rule out some certain  $(T,T_\alpha)$ in Table~\ref{tb:largesubgroups} of Theorem~\ref{th:largeAS}.


\begin{lemma}\label{lm:smallcases}
$  T\notin \{ G_2(2)',{}^2G_2(3)', {}^2F_4(2)'\}$, and $(T,T_\alpha)$ is not one of the following: 
\begin{align*}
& ({}^2B_2(8),5\,{:}\,4),  ({}^2B_2(32),41\,{:}\,4), ({}^3D_4(2),7^2.\mathrm{SL}_2(3)), ({}^2F_4(8),\mathrm{SU}_3(8)\,{:}\,2), ({}^2F_4(8),\mathrm{PGU}_3(8)\,{:}\,2),  \\ 
&(G_2(3),\mathrm{PSL}_2(13)), (G_2(3),2^3.\mathrm{PSL}_3(2)),(G_2(4),\mathrm{J}_ 2),(G_2(4),\mathrm{PSL}_2(13)), (G_2(3),2^3.\mathrm{PSL}_3(2)),\\ 
& (G_2(5),G_2(2)),(G_2(7),G_2(2)), (G_2(11),\mathrm{J}_1),(F_4(2),\mathrm{PSL}_4(3).2),  (F_4(2),\mathrm{S}_6\wr \mathrm{S}_2),   (F_4(3),{}^3D_4(2)), \\
&({}^2E_6(2),\mathrm{J}_{3}), ({}^2E_6(2),\mathrm{A}_{12}), ({}^2E_6(2),\mathrm{P\Omega}_{7}(3)),({}^2E_6(2),\mathrm{Fi}_{22}),(E_7(2),\mathrm{Fi}_{22}).   
\end{align*}
%
%
%
%


\end{lemma}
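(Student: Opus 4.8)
The plan is to rule out each of the finitely many pairs $(T,T_\alpha)$ listed in the statement (and each of the small groups $G_2(2)'$, ${}^2G_2(3)'$, ${}^2F_4(2)'$) by a uniform computational argument, using the numerical criteria collected in Section~\ref{sec:methods}. For each candidate we know $|T|$, the structure of $T_\alpha$ (hence $|T_\alpha|$), and $|\mathrm{Out}(T)|$ from~\cite{Atlas}; this determines $v=|T|/|T_\alpha|$ exactly as an integer. The first and cheapest test is inequality~\eqref{eq:k-1}: compute $\mathrm{gcd}(|T_\alpha|,(v-1)(v-2))\cdot|\mathrm{Out}(T)|$ and check whether it exceeds $\sqrt{v}-2$; by Lemma~\ref{lm:XGCD} a failure immediately eliminates the pair. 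I expect this single inequality to dispose of the large-$v$ entries such as $({}^2E_6(2),\mathrm{Fi}_{22})$, $(E_7(2),\mathrm{Fi}_{22})$, $({}^2E_6(2),\mathrm{A}_{12})$, $({}^2E_6(2),\mathrm{J}_3)$, $({}^2E_6(2),\mathrm{P\Omega}_7(3))$, $(F_4(2),\mathrm{PSL}_4(3).2)$, $(F_4(2),\mathrm{S}_6\wr\mathrm{S}_2)$, $(F_4(3),{}^3D_4(2))$, $({}^2F_4(8),\mathrm{SU}_3(8).2)$ and $({}^2F_4(8),\mathrm{PGU}_3(8).2)$, where $T_\alpha$ is tiny relative to $T$.

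For the pairs that survive~\eqref{eq:k-1}, I would run the full divisor sieve of steps (ii)--(iii) of Section~\ref{sec:methods}: enumerate the divisors $k-2$ of $v-2$ via Lemma~\ref{lm:brlambda2}(c), and for each resulting $k$ test the bound $k<\sqrt{v}+2$ of Lemma~\ref{lm:vk} together with the divisibility conditions $(k-1)(k-2)\mid(v-1)(v-2)$, $k(k-1)(k-2)\mid v(v-1)(v-2)$ and $(v-1)(v-2)\mid k(k-1)(k-2)|T_\alpha||\mathrm{Out}(T)|$ from Lemmas~\ref{lm:brlambda2} and~\ref{lm:akbv}(a). For most of the remaining pairs — for instance $({}^2B_2(8),5{:}4)$, $({}^2B_2(32),41{:}4)$, $({}^3D_4(2),7^2.\mathrm{SL}_2(3))$, $(G_2(3),\mathrm{PSL}_2(13))$, $(G_2(3),2^3.\mathrm{PSL}_3(2))$, $(G_2(4),\mathrm{J}_2)$, $(G_2(4),\mathrm{PSL}_2(13))$, $(G_2(5),G_2(2))$, $(G_2(7),G_2(2))$, $(G_2(11),\mathrm{J}_1)$ — I expect no value of $k$ in the range $3<k<v$ to satisfy all of these simultaneously, which finishes the pair. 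Where a few candidate values of $k$ do slip through the arithmetic, I would invoke step (iv): compute the candidate blocks as unions of $G_B$-orbits on $\mathcal{P}$ (the order $|G_B|$ being forced by Lemma~\ref{lm:akbv}(a)), and check directly in {\sc Magma}~\cite{Magma} whether any $G$-orbit of such a $k$-set yields a $3$-$(v,k,1)$ design; since all these $T$ have small permutation degree this is feasible.

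Finally, the three groups $G_2(2)'\cong\mathrm{PSU}_3(3)$, ${}^2G_2(3)'\cong\mathrm{PSL}_2(8)$ and ${}^2F_4(2)'$ are genuinely small, so for each I would go through all core-free maximal subgroups $G_\alpha$ of every almost simple group $G$ with that socle (these are listed in~\cite{Atlas}), apply the same pipeline~\eqref{eq:k-1}--\eqref{eq:k4}, and, for the handful of surviving parameter sets, perform the explicit block-construction check of step (iv). The main obstacle I anticipate is not any single hard case but organising the bookkeeping so that \emph{no} admissible $(G,G_\alpha,v,k)$ is overlooked — in particular being careful that for each $T$ one ranges over all intermediate $G$ with $T\le G\le\mathrm{Aut}(T)$ (equivalently over all subgroups $\mathcal{O}\le\mathrm{Out}(T)$ in Hypothesis~\ref{hy:1}), and that the Atlas lists of maximal subgroups of $F_4(2)$, ${}^2E_6(2)$ and $E_7(2)$ used here are the complete ones, as noted in Remark~\ref{remark1}(d). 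Once the sieve returns empty for every case, the lemma follows.
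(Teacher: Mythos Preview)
Your proposal is correct and follows essentially the same computational approach as the paper, which simply asserts that for each listed case a straightforward calculation shows no $k$ satisfies conditions~\eqref{eq:k0}--\eqref{eq:k4}. In particular the paper's proof never invokes step~(iv), so your anticipated fallback to explicit block construction is unnecessary --- the arithmetic sieve alone eliminates every case.
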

\begin{proof}
If $T\in \{ G_2(2)',{}^2G_2(3)', {}^2F_4(2)'\}$ or $(T,T_\alpha)$ is one of the above, then a  straightforward calculation shows that there is no $k$ satisfying~\eqref{eq:k0}--\eqref{eq:k4}.
\end{proof}

 

 
Recall the inversive plane of order $q$  (a $3$-$(q^2,q+1,1)$ design) on the Suzuki group ${}^2B_2(q)$ in Section~\ref{sec:1}. 
In this $3$-$(q^2,q+1,1)$ design,  the group ${}^2B_2(q)$ acts $2$-transitively on the point set, and this $2$-transitive action of  ${}^2B_2(q)$ can be realized on the set of so-called \emph{Suzuki-Tits ovoids} (for definition and more information about the Suzuki-Tits ovoid, the reader may refer to Alavi's note~\cite{Alavi-note} and references therein).
Moreover, the point stabilizer in ${}^2B_2(q)$ is a maximal parabolic subgroup isomorphic to $[q^2]\,{:}\,(q-1)$. We show in the next lemma that $\mathcal{D}$ is exactly this inversive plane of $q$ when $T={}^2B_2(q)$ and $T_\alpha$ is a maximal parabolic subgroup $[q^2]\,{:}\,(q-1)$.
\begin{lemma}\label{lm:2B2q}
Suppose that $T={}^2B_2(q)$ and $T_\alpha$ is a maximal parabolic subgroup $[q^2]\,{:}\,(q-1)$, where $q=2^e$ with odd $e>1$. Then $v=q^2+1$ and $k=q+1$, and so $\mathcal{D}$ is an  inversive plane of order $q$ with $\mathrm{Aut}(\mathcal{D})=\mathrm{Aut}({}^2B_2(q))={}^2B_2(q)\,{:}\, e$.
\end{lemma}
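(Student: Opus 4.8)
\textbf{Proof proposal for Lemma~\ref{lm:2B2q}.}

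The plan is to pin down the parameters $v$ and $k$ from the group data, then invoke the uniqueness of the inversive plane of order $q$ admitting ${}^2B_2(q)$ as an automorphism group (L\"uneburg's theorem, as recalled in Section~\ref{sec:1}). First I would record the basic numerology: since $T_\alpha = [q^2]\,{:}\,(q-1)$ has order $q^2(q-1)$ and $\vert T\vert = q^2(q^2+1)(q-1)$, the point-transitivity of $T$ gives $v = \vert T\vert/\vert T_\alpha\vert = q^2+1$. Next I would apply Lemma~\ref{lm:XGCD} (or directly Lemma~\ref{lm:akbv}(c)) together with the divisibility conditions \eqref{eq:k0}--\eqref{eq:k4} to constrain $k$. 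With $v-2 = q^2-1 = (q-1)(q+1)$, Lemma~\ref{lm:brlambda2}(c) forces $k-2 \mid (q-1)(q+1)$, and Lemma~\ref{lm:vk} forces $k < \sqrt{q^2+1}+2$, i.e.\ essentially $k \le q+1$. The key point is to show no value strictly between $3$ and $q+1$ survives: here I expect to use that $\vert \mathrm{Out}(T)\vert = e$ is small relative to $\sqrt{v}$, so Lemma~\ref{lm:XGCD} reads $\mathrm{gcd}(q^2(q-1),(q^2-1)(q^2-2))\cdot e > q - 2$; combining this with the constraint $(k-1)(k-2)\mid (v-1)(v-2) = (q^2-1)q^2$ and $k-2\mid q^2-1$ should leave only $k = q+1$ (one can also note that Lemma~\ref{lm:akbv}(a) gives $\vert T_B\vert (v-1)(v-2) = \vert T_\alpha\vert k(k-1)(k-2)/s$ for the appropriate orbit count $s$, which becomes very rigid once $v$ is fixed).

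Once $v = q^2+1$ and $k = q+1$ are established, $\mathcal{D}$ is by definition a $3$-$(q^2+1,q+1,1)$ design, i.e.\ an inversive plane of order $q$, and $T = {}^2B_2(q) \le G \le \mathrm{Aut}(\mathcal{D})$. By L\"uneburg~\cite{L1964} there is, up to isomorphism, a unique inversive plane of order $q = 2^e$ (odd $e>1$) admitting an automorphism group isomorphic to ${}^2B_2(q)$, and by~\cite[p.275,4]{Dembowski-book} its full automorphism group is $\mathrm{Aut}({}^2B_2(q)) \cong {}^2B_2(q)\,{:}\,e$. Hence $\mathrm{Aut}(\mathcal{D}) = \mathrm{Aut}({}^2B_2(q)) = {}^2B_2(q)\,{:}\,e$, which also shows $G = T.f$ with $f \mid e$, completing the proof.

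The main obstacle I anticipate is the middle step: ruling out the intermediate candidates for $k$ cleanly, without splitting into many cases. The greatest common divisor $\mathrm{gcd}(q^2(q-1),(q^2-1)(q^2-2))$ depends on $q \bmod$ small numbers (for instance $q^2-2$ contributes a factor sharing content with $q-1$ only through $\gcd(q-1,1)=1$ arguments, etc.), so a careful but elementary analysis is needed to show the combined inequality \eqref{eq:k-1} plus the divisibility chain $k-2\mid (q-1)(q+1)$, $(k-1)(k-2)\mid q^2(q-1)(q+1)$ has $k=q+1$ as its only solution with $k > 3$. An alternative, possibly cleaner route is to observe that ${}^2B_2(q)$ acting on the cosets of $[q^2]\,{:}\,(q-1)$ is $2$-transitive of degree $q^2+1$ with point-pair stabilizer of order $q-1$; block-transitivity then makes $G_B$ contain a subgroup fixing the block setwise, and a short direct argument on the Suzuki-Tits ovoid geometry identifies the blocks as the $q^2+1$ Suzuki-Tits ovoids, giving $k = q+1$ at once. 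I would present the numerical argument as the primary one and mention the geometric interpretation as a remark.
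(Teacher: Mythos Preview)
Your proposal has a genuine gap in the central step of pinning down $k$. First, note that $(v-1)(v-2) = q^2(q^2-1)$, not $(q^2-1)(q^2-2)$ as you wrote. More seriously, with the correct value your main tool collapses: since $\vert T_\alpha\vert = q^2(q-1)$ divides $q^2(q^2-1) = (v-1)(v-2)$, we have $\gcd(\vert T_\alpha\vert,(v-1)(v-2)) = \vert T_\alpha\vert$, so Lemma~\ref{lm:XGCD} (equivalently Lemma~\ref{lm:akbv}(c)) gives no information whatsoever. The remaining divisibility constraints \eqref{eq:k0}--\eqref{eq:k4} are likewise too weak on their own; for instance when $q=8$ they still admit $k=5$ (check: $k-2=3\mid 63$, $(k-1)(k-2)=12\mid 64\cdot 63$, and $(q+1)=9\mid k(k-1)(k-2)e=180$), and for larger $q$ many small divisors of $q^2-1$ survive these tests.

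The paper's proof supplies the missing idea: it exploits the fact that the normal Sylow $2$-subgroup of $T_\alpha$ acts regularly on $\mathcal{P}\setminus\{\alpha\}$, so any involution in $T_\alpha$ generates a quasi-semiregular subgroup of order $2$. Lemma~\ref{lm:order3}(c) then forces $(k-1)\mid (v-1)=2^{2e}$, i.e.\ $k=2^a+1$. Combined with $k-2\mid v-2$ (giving $a\mid 2e$ via Lemma~\ref{le:divide}) and $k<\sqrt{v}+2$ (giving $a\le e$), this reduces to finitely many $a$, and a primitive prime divisor of $2^{2e}-1$ applied to the integrality of $\vert G_B\vert$ from Lemma~\ref{lm:akbv}(a) forces $a=e$ for $e\ge 5$; the residual case $e=3$, $a=2$ is eliminated by a short \textsc{Magma} check. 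Your geometric alternative via the Suzuki--Tits ovoid is in the right spirit, but as stated it presupposes that the blocks \emph{are} the ovoid circles, which is precisely what needs proving.
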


\begin{proof}
Since $ \vert T \vert =q^2(q^2+1)(q-1)$, 
it follows from the point-transitivity of $T$ that $v= \vert T \vert / \vert T_\alpha \vert =q^2+1$.
By L\"{u}neburg~\cite[Theorem~1~and~Corollary~1]{L1964}, there is only one (up to isomorphism) inversive plane of order $q$ that admits an automorphism group isomorphic to ${}^2B_2(q)$; and  by Dembowski~\cite[p.275,4]{Dembowski-book}, the full automorphism group of this inversive plane is $\mathrm{Aut}({}^2B_2(q))={}^2B_2(q)\,{:}\,e$.
Therefore, we only need to show  $k=q+1$.

Write $e=2m+1$ with a positive integer $m$, and write $G={}^2B_2(q).f$ with $f\mid e $.   
Note that $T$ acts $2$-transitively on $\mathcal{P}$,  and so $T_\alpha=[q^2]:(q-1)$ acts transitively on $ \mathcal{P}\setminus \{\alpha\}$.
Moreover,  the normal $2$-subgroup with order $q^2$ of $T_\alpha$ acts regularly on $ \mathcal{P}\setminus \{\alpha\}$ (see~\cite[Section~7.7,~p.250]{DM-book} for example).
Let $H$ be a subgroup of order $2$ in $T_\alpha$.
Then $H$ fixes exactly one point $\alpha$ of $\mathcal{P}$ and acts semiregularly on the set of remaining points, that is,  $H$ is quasi-semiregular.
It follows from Lemma~\ref{lm:order3}(c)  that $k-1$ divides $v-1=q^2=2^{2e}$.
Therefore, $k=2^a+1$ for some integer $a$ such that $0\leq a \leq 2e$. 
Since $\mathcal{D}$ is non-trivial, we have $3< k< v$ and hence $1< a<2e$.
Moreover, since $k-2=2^a-1$ dividing $v-2=2^{2e}-1$, we conclude from  Lemma~\ref{le:divide} that $a$ divides $2e$.  
Now, by Lemma~\ref{lm:vk}, 
\[k\leq \sqrt{v}+2= \sqrt{q^2+1}+2<\sqrt{q^2+2q+1}+2=q+3. \]
This together with $q\geq 8$ implies that  $a\leq e$.

According to Lemma~\ref{lm:akbv}(a),  
\[
 \vert G_B \vert =\frac{ \vert G_\alpha \vert k(k-1)(k-2)}{(v-1)(v-2)}=\frac{fq^2(q-1)k(k-1)(k-2)}{q^2(q^2-1)}=\frac{f2^a(2^{2a}-1)(2^{e}-1)}{2^{2e}-1}. 
\]
Suppose that $e=3$. 
From $a \mid 2e=6$ and $1<a\leq e=3$, we see  $a=2$ or $3$. 
If $a=3$, then $k=2^a+1=q+1$, as required.
It remains to show that the case $a=2$ is impossible. 
Suppose for a contradiction that $a=2$. 
Now $v=2^{2e}+1=65$ and $k=2^a+1=5$.
From $ \vert G_B \vert =f\cdot 2^2\cdot 15\cdot 7/63$ and $f\mid e=3$, we conclude that $f=3$ and $ \vert G_B \vert =60$.
Then $G={}^2B_2(8)\,{:}\,3$.
Computation in {\sc Magma}~\cite{Magma} shows that $G$ has only one conjugate classes of subgroups of order $60$, and a subgroup $H$ of order $60$ has two orbits of lengths $5$ and $60$ on $\mathcal{P}$.
This implies that $\mathcal{D}$ is flag-transitive and $\mathcal{B}=C^G$, where $C$ is the orbit of length $5$ of $H$.
However, further computation shows that there is some $3$-subset of $\mathcal{P}$ which is not in a  block in $\mathcal{B}=C^G$, a contradiction. 

Therefore, $e\geq 5$.
By Lemma~\ref{lm:ppd}, $2^{2e}-1$ has a primitive prime divisor $r$, and $r $ does not divides  both $f$ and $2^{e}-1$.
Clearly, $r$ also does not divides $2^a$ as $a\leq e$.
Since  $ \vert G_B \vert $ is an integer, it follows that $r$ divides $2^{2a}-1 $, which together with  $a\leq e$ implies $a=e$.
Therefore, $k=2^a+1=q+1$, as required.  
\end{proof}

\begin{lemma}\label{lm:2G2q}
If $T={}^2G_2(q)$, where $q=3^e$ with odd $e>1$, then $T_\alpha$ is not a maximal parabolic subgroup $[q^3]\,{:}\,(q-1)$.
\end{lemma}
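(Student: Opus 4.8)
The plan is to exploit that $T_\alpha=[q^3]{:}(q-1)$ is the point stabiliser of the $2$-transitive action of ${}^2G_2(q)$ on $q^3+1$ points, and to play the divisibility conditions of block-transitivity against the quasi-semiregularity observation of Lemma~\ref{lm:order3}. First I would record, from $|{}^2G_2(q)|=q^3(q^3+1)(q-1)$ and $|T_\alpha|=q^3(q-1)$, that $v=q^3+1$, $v-1=q^3=3^{3e}$ and $v-2=q^3-1=(q-1)(q^2+q+1)$. In this action the unipotent radical $U$ of $T_\alpha$ is a $3$-group of order $q^3$ acting regularly on $\mathcal{P}\setminus\{\alpha\}$; hence any subgroup $H\le U$ of order $3$ fixes only $\alpha$ and is semiregular on the remaining points, i.e.\ $H$ is quasi-semiregular of order $t=3$. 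By Lemma~\ref{lm:order3}(b), $3^{3e}=v-1$ is divisible by $k$ or by $k-1$, so $k=3^a$ or $k=3^a+1$ for some $a\ge1$. Writing $f=|\mathcal{O}|$ (so $f$ divides $e=|\mathrm{Out}(T)|$), Lemma~\ref{lm:akbv}(a) reduces, after cancelling $q^3(q-1)$, to $f\,k(k-1)(k-2)=|G_B|(q^2+q+1)$, so that $(q^2+q+1)\mid f\,k(k-1)(k-2)$; this integrality statement is the arithmetic core of the argument.

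Secondly I would pin down the exponent $a$. In both cases Lemma~\ref{lm:brlambda2} together with Lemma~\ref{le:divide} force $3^a-1\mid 3^{3e}-1$, hence $a\mid 3e$, and Lemma~\ref{lm:vk} ($v-2\ge(k-1)(k-2)$) forces $2a\le 3e$; thus $a$ is a proper divisor of $3e$. Since $e$ is odd the least prime divisor of $3e$ is $3$, so the largest proper divisor of $3e$ is $e$ and the only proper divisor exceeding $3e/4$ is $e$ itself; hence $a\le 3e/4$ or $a=e$. If $a\le 3e/4$, then $q^2+q+1$, being coprime to $3$, divides $f(3^a-1)(3^a-2)$ (case $k=3^a$) or $f(3^{2a}-1)$ (case $k=3^a+1$), hence divides a positive integer smaller than $e\cdot 3^{2a}$, so that
\[
q^2+q+1\ <\ e\cdot 3^{2a}\ <\ 3^{2e}\ <\ q^2+q+1 ,
\]
the middle inequality because $a\le 3e/4$ gives $3^{2e-2a}\ge 3^{e/2}>e$; this is absurd. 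If $a=e$ there are two subcases. For $k=q$, Lemma~\ref{lm:brlambda2}(c) gives $q-2\mid v-2=q^3-1\equiv 7\pmod{q-2}$, whence $q-2\le 7$, contradicting $q=3^e\ge27$. For $k=q+1$, the integrality condition gives $(q^2+q+1)\mid f\,q(q^2-1)$; but $\gcd\bigl(q^2+q+1,\,q(q^2-1)\bigr)=1$ (a common divisor of $q^2+q+1$ and $q^2-1$ divides $q+2$ and hence $3$, while $3\nmid q^2+q+1$), forcing $(q^2+q+1)\mid f\le e$, which is impossible. Each branch ends in a contradiction, proving the lemma.

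The decisive ingredient is Lemma~\ref{lm:order3}: it is what confines $k$ to the two thin families $3^a$ and $3^a+1$, and without it the divisibility $(q^2+q+1)\mid f\,k(k-1)(k-2)$ is far too coarse to control $k$ at all. The remainder is elementary, using neither Zsigmondy primes nor the subdegree machinery of Lemma~\ref{lm:vvdd}. The only point asking for a little care is the endpoint $a=e$, where the crude size estimate degenerates and one must read off instead a coprimality relation between $q^2+q+1$ and $k(k-1)(k-2)$; but each of the two endpoint subcases collapses after a one-line gcd computation, so I do not anticipate a genuine obstruction there.
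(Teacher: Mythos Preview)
Your argument is correct and shares its opening with the paper's proof: both exploit the regular action of the unipotent radical on $\mathcal{P}\setminus\{\alpha\}$ to invoke Lemma~\ref{lm:order3}(b) and force $k\in\{3^a,\,3^a+1\}$, and both then extract $a\mid 3e$ from Lemma~\ref{lm:brlambda2} and Lemma~\ref{le:divide}. The execution thereafter differs. The paper treats the two families separately: for $k=3^a+1$ it appeals to a Zsigmondy primitive prime divisor of $3^{3e}-1$ to contradict integrality of $|G_B|$; for $k=3^a$ it splits into $a<e$ (a size estimate) and $a=e$, the latter using $\gcd(3^{3e}-1,3^e-2)\mid 7$ and leaving $e=3$ to be eliminated by an explicit check. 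Your route is more uniform and more elementary: the single inequality $q^2+q+1<e\cdot 3^{2a}<3^{2e}=q^2$ disposes of every $a\le 3e/4$ in both families at once, without Zsigmondy, and your two endpoint computations at $a=e$ are each one line---in particular, reading off $(q-2)\mid 7$ directly from $\lambda_2\in\mathbb{Z}$ for $k=q$ is cleaner than the paper's handling of that subcase. The trade-off is that your dichotomy $a\le 3e/4$ or $a=e$ leans on the (easy but worth stating) fact that $3e$ is odd, so its second-largest divisor is at most $3e/5$; once that is said, nothing further is needed.
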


 \begin{proof} 
 Suppose for a contradiction that $T_\alpha=[q^3]\,{:}\,(q-1)$.
 By~\cite[Table~8.43]{BHRD2013} we see that $G={}^2G_2(q).f$, where $f \mid e$, and $G_\alpha=T_\alpha.f$. 
Now $ v= \vert T \vert / \vert T_\alpha \vert =q^3+1$.  
Notice that $T$ acts $2$-transitively on $\mathcal{P}$,  and the normal $3$-subgroup with order $q^3$ of $T_\alpha$ acts regularly on $ \mathcal{P}\setminus \{\alpha\}$ (see~\cite[p.252]{DM-book} for example).
Therefore, every subgroup of order $3$  in $T_\alpha$ is quasi-semiregular.
From Lemma~\ref{lm:order3}(c)  we see that $v-1=q^3=3^{3e}$ is divisible by either $k-1$  or $k$.

Assume that $v-1 $ is divisible  by $k-1$. 
Then $k=3^a+1$ for some $0\leq a \leq 3e$.
Since $\mathcal{D}$ is non-trivial, we have $k\geq 4$ and so $1\leq a < 3e$.
By Lemma~\ref{le:divide}, $a$ divides $3e$, which implies that $2a < 3e$.
From Lemma~\ref{lm:akbv}, we conclude 
\[
 \vert G_B \vert =\frac{ \vert G_\alpha \vert k(k-1)(k-2)}{ (v-1)(v-2)}=\frac{ q^3 (q-1)f(3^a+1)3^a(3^a-1)}{q^3(q^3-1)}=\frac{(3^e-1)f3^a(3^{2a}-1)}{ 3^{3e}-1  }.
\]
However, Lemma~\ref{lm:ppd} tells us that $3^{3e}-1$ has a primitive prime divisor $r$ such that $r$ does not divide $f$, $3^e-1$ and $3^{2a}-1$ (notice that $2a<3e$).
This is a contradiction.

Assume that $v-1 $ is divisible  by $k$. 
Then $k=3^a$ for some $2\leq a \leq 3e$ (notice that $k\geq 4$). 
Now, 
\[
 \vert G_B \vert =\frac{ \vert G_\alpha \vert k(k-1)(k-2)}{ (v-1)(v-2)}=\frac{ q^3 (q-1)fk(k-1)(k-2)}{q^3(q^3-1)}=\frac{(3^{e}-1)f3^a(3^a-1)(3^a-2)}{  3^{3e}-1  }.
\]
Since $\mathrm{gcd}(3^{3e}-1,3^a)=1$, we conclude that $ \vert G_B \vert $ is divisible by $3^a$, and hence
\begin{equation}\label{eq:2G2q}
 3^{3e}-1 \mid (3^{e}-1)f(3^a-1)(3^a-2).
\end{equation} 

According to Lemma~\ref{lm:brlambda2}, $(v-1)(v-2)=3^{3e}(3^{3e}-1)$ is divisible by $(k-1)(k-2)= (3^{a}-1)(3^{a}-2)$.
Since $\mathrm{gcd}(3^{a}-1,3^{3e})=1$ and $\mathrm{gcd}(3^{a}-2,3^{3e})=1$, we conclude that $3^{3e}-1$  is divisible by $(3^{a}-1)(3^{a}-2)$.
In particular, $3^{3e}-1$  is divisible by $ 3^{a}-1 $.
It follows from Lemma~\ref{le:divide} that $a \mid 3e$.  
Let $m$ be an integer such that $am=3e$.
Since $e$ is odd, we see that $m$ is also odd and $a\leq e$.

Suppose that $a<e$. Then $m>3$ and so $m\geq 5$. From~\eqref{eq:2G2q} we conclude that
\[
  3^{3e-1}<3^{3e}-1\leq f\cdot (3^{e}-1)(3^{\frac{3e}{5}}-1)(3^{\frac{3e}{5}}-2)<e \cdot  3^{\frac{11}{5}e},
\] 
and hence $ 3e-1<\log_3(e)+11e/5$. This inequality does not hold for any odd $e\geq 3$. Therefore, the case $a<e$ is impossible.

Suppose that $a=e$. Since $(3^{3e}-1)-(3^{2e}+2\cdot 3^e+4)(3^e-2)=7$, we see that $\mathrm{gcd}(3^{3e}-1,3^e-2)\mid 7$.
Then from~\eqref{eq:2G2q} we conclude that
\[
3^{3e-1}<3^{3e}-1\leq f\cdot (3^{e}-1)^2 \cdot 7<3^2\cdot e \cdot 3^{2e},
\]
which implies that $3e-1<2+\log_3(e)+2e$.
Notice that $e=3$ is the only solution for this inequality on odd $e\geq 3$.
However, now $757\mid 3^{3\cdot 3}-1 $, while $757$ is not a divisor of $ (3^3-1)^2\cdot 3\cdot (3^3-2)$, contradicting~\eqref{eq:2G2q}. 
 \end{proof}

\begin{lemma}\label{lm:E6q}
If $T=E_6(q)$, then  $T_\alpha$ is  not  a maximal parabolic subgroup of type $D_5(q)$. 
\end{lemma}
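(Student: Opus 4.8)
The plan is to follow the computational strategy laid out in Section~\ref{sec:methods}. First I would record the relevant numerical data for the candidate $(T,T_\alpha)=(E_6(q),P)$ where $P$ is the maximal parabolic of type $D_5(q)$: using the order formulas for exceptional groups, $|T_\alpha| = q^{16}|D_5(q)|\cdot(q-1)/\gcd(3,q-1)$ (or the appropriate normalisation so that $T_\alpha$ is core-free maximal in $T$), and compute $v=|T|/|T_\alpha|$, which is the number of points on the relevant flag variety, a polynomial in $q$ of degree $16$. I would also note that $|\mathrm{Out}(T)|$ divides $2ef\cdot\gcd(3,q-1)$ with $f$ bounded by a small constant times $e=\log_p q$, so $|\mathcal{O}|$ is $O(\log q)$ up to an absolute constant.

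The main step is to apply Lemma~\ref{lm:XGCD}: viewing $|T_\alpha|$ and $(v-1)(v-2)$ as polynomials in $q$ over $\mathbb{Q}$, I would run the extended gcd (the \textsf{XGCD}/\textsf{GcdRepresentation} computation described in the text) to obtain polynomials $P(q),Q(q),R(q)$ with $P(q)|T_\alpha| + Q(q)(v-1)(v-2)=R(q)$, clear denominators to get $R_1(q)=P_1(q)|T_\alpha|+Q_1(q)(v-1)(v-2)$ with integer-coefficient polynomials, and conclude $\gcd(|T_\alpha|,(v-1)(v-2))$ divides $R_1(q)$. Since $R(q)$ will be (a constant multiple of) a low-degree polynomial in $q$ — one expects $R_1(q)$ to have degree at most $1$ or $2$, as in the $G_2(q)$ example worked in the text — Lemma~\ref{lm:XGCD} forces an inequality of the shape $c\,q^{\deg R_1}\log_p(q) > \sqrt{v}-2$, where $\sqrt v$ grows like $q^{8}$. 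This inequality can hold only for finitely many small $q$, and in fact for no prime power $q\geq 2$ given that $\deg R_1$ is so small compared to $8$.

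For the finitely many residual small values of $q$ (if any survive the crude inequality), I would fall back on the finite check of conditions~\eqref{eq:k0}--\eqref{eq:k4}: enumerate divisors $k-2$ of $v-2$, test $k<\sqrt v+2$, the divisibility conditions $(k-1)(k-2)\mid(v-1)(v-2)$, $k(k-1)(k-2)\mid v(v-1)(v-2)$, and $(v-1)(v-2)\mid k(k-1)(k-2)|T_\alpha||\mathrm{Out}(T)|$, exactly as in Lemma~\ref{lm:smallcases}; these leave no admissible $k$.

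The hard part will not be any single deduction but getting the polynomial gcd computation right: one must be careful that $|T_\alpha|$ is the order of the genuine point stabiliser in $T$ (with the correct $\gcd(3,q-1)$ factors that depend on $q\bmod 3$), since an incorrect normalisation changes $(v-1)(v-2)$ and hence $R_1(q)$. A secondary subtlety is that the conclusion $\gcd(|T_\alpha|,(v-1)(v-2))\mid R_1(q)$ as polynomials only gives a divisibility of integers after substituting an actual prime power $q$, and one should check the finitely many $q$ for which $R_1(q)$ evaluated as an integer might be divisible by small primes dividing the denominators cleared — but these are exactly the small cases handled by the direct search above, so no genuine gap remains.

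\begin{proof}

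\end{proof}
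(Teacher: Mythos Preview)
Your approach differs from the paper's, and there is a real gap. You propose to run the XGCD on $|T_\alpha|$ and $(v-1)(v-2)$ and assert that ``one expects $R_1(q)$ to have degree at most $1$ or $2$''. That expectation is unjustified and almost certainly false here: the paper deliberately singles out this $E_6(q)$ parabolic case (together with the $E_7(q)$ parabolic of type $E_6$) for separate treatment precisely because the generic $|T_\alpha|$-XGCD of Subsection~\ref{sec:methods} does \emph{not} yield a bound strong enough to beat $\sqrt{v}\sim q^{8}$. The stabiliser $T_\alpha$ here has a very rich factorisation (a Levi of type $D_5$ times a torus times a large unipotent), so $\gcd(|T_\alpha|,(v-1)(v-2))$ need not be small as a polynomial in $q$.

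What the paper actually does is invoke the \emph{subdegree} version, Lemma~\ref{lm:vvdd}(b), using the explicit orbit structure of $T_\alpha$ on $\mathcal{P}\setminus\{\alpha\}$ from Vasilyev's description: with the subdegree $d=d_2=q^8(q^5-1)(q^4+1)/(q-1)$, one computes that $\gcd(d(d-1),(v-1)(v-2))$ divides $7970(q^5+1)$, a degree-$5$ polynomial, which together with $f\mid 3\log_p q$ forces $q\le 32$; the finitely many survivors are then killed by the checks~\eqref{eq:k0}--\eqref{eq:k4}. The key extra ingredient you are missing is this subdegree information, which replaces the unmanageable $|T_\alpha|$ by the much more tractable $d(d-1)$.
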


\begin{proof}
Suppose for a contradiction that $T_\alpha$ is a maximal parabolic subgroup of type $D_5(q)$.
From~\cite[Theorem~1]{Vasilev2} we see that  $
v=  (q^{8}+q^4+1)(q^9-1)/(q-1)$ 
and $T_\alpha$ has $2$ orbits on $\mathcal{P}\setminus \{\alpha\}$ with lengths $d_1:= (q^8-1)(q^3+1)/(q-1)$ and $d_2:=q^8(q^5-1)(q^4+1)/(q-1)$.
Since $|\mathrm{Out}(T)|=\mathrm{gcd}(3,q-1)\log_p(q)$, it follows that $f:= \vert G \vert / \vert T \vert $ divides $ 3\log_p(q)$. 
Let $d=d_2$.
Computation in {\sc Magma}~\cite{Magma} shows that $\mathrm{gcd}(d(d-1),(v-1)(v-2)) $ divides $ 7970(q^5+1)$.
According to Lemma~\ref{lm:vvdd}(b), we derive that
\[
3\log_p(q)\cdot 7970(q^5+1) <\sqrt{v}-2.
\]
Computation shows that the above inequality on $q$ holds only if $q\leq 32$.
Further computation shows that, for any $q\leq 32$, there is no $k$ satisfying the above inequality and \eqref{eq:k0}--\eqref{eq:k4}. 
\end{proof}

\begin{lemma}\label{lm:E7q}
If $T=E_7(q)$, then  $T_\alpha$ is  not  a maximal parabolic subgroup of type $E_6(q)$. 
\end{lemma}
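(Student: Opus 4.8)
Suppose for a contradiction that $T_\alpha$ is a maximal parabolic subgroup of type $E_6(q)$; in the Bourbaki labelling this is the end-node parabolic $P_7$, whose Levi factor is $E_6(q)$ times a one-dimensional torus and whose unipotent radical has order $q^{27}$. Using the order formulas for $E_7(q)$ and $E_6(q)$ (see~\cite[Table~5.1.B]{K-Lie}), one computes
\[
v=\frac{\vert T\vert}{\vert T_\alpha\vert}=\frac{(q^{18}-1)(q^{14}-1)(q^{10}-1)}{(q-1)(q^{9}-1)(q^{5}-1)}=(q^{9}+1)(q^{5}+1)\,\frac{q^{14}-1}{q-1},
\]
a polynomial in $q$ of degree $27$. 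Since the diagram $E_7$ has no non-trivial symmetry, $\mathrm{Out}(T)$ contains no graph automorphism, so $\vert\mathcal{O}\vert\le\vert\mathrm{Out}(T)\vert=\gcd(2,q-1)\cdot e\le 2e$, where $q=p^e$.

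My plan is to follow the procedure of Section~\ref{sec:methods}, beginning with Lemma~\ref{lm:XGCD}. Viewing $\vert T_\alpha\vert$ and $(v-1)(v-2)$ as polynomials in $q$, the {\sc Magma}~\cite{Magma} command \textsf{XGCD} produces an explicit polynomial $R_1(q)$ with $\gcd(\vert T_\alpha\vert,(v-1)(v-2))\mid R_1(q)$. Here $R_1$ has low degree: on the one hand $q\mid v-1$ and $(q^9-1)/(q-1)$ divides $v-2$ --- indeed $v(\zeta)=2$ for every $9$th root of unity $\zeta\ne 1$, because $(\zeta^{5}+1)(1+\zeta+\cdots+\zeta^{4})=(\zeta^{10}-1)/(\zeta-1)=1$ --- so $q(q^9-1)/(q-1)$ divides $\gcd(\vert T_\alpha\vert,(v-1)(v-2))$; on the other hand a direct check of the remaining cyclotomic factors of $\vert T_\alpha\vert$ shows nothing further is forced, so $\deg R_1\le 11$. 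Combining $\gcd(\vert T_\alpha\vert,(v-1)(v-2))\le\vert R_1(q)\vert$ with Lemma~\ref{lm:XGCD} and $\vert\mathcal{O}\vert\le 2e$ gives an inequality of the form $2e\,\vert R_1(q)\vert>\sqrt{v}-2$; since $\sqrt{v}$ grows like $q^{27/2}$ while the left-hand side is $2e$ times a polynomial of degree at most $11$, this forces $q\le q_0$ for an explicit (small) $q_0$. Should $q_0$ prove inconveniently large, the estimate can be sharpened exactly as in Lemma~\ref{lm:E6q}: pick a subdegree $d>1$ of $T$ on $\mathcal{P}$ (the subdegrees for $E_7(q)$ on $P_7$-cosets are determined by the double cosets $W_{E_6}\backslash W_{E_7}/W_{E_6}$, or may be quoted from the literature) and apply Lemma~\ref{lm:vvdd}(b) with $\gcd(d(d-1),(v-1)(v-2))$ in place of $\gcd(\vert T_\alpha\vert,(v-1)(v-2))$.

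It then remains to handle the finitely many surviving values of $q$. For each of these I would run the checklist of Section~\ref{sec:methods}: list the divisors of $v-2$ to obtain the candidates for $k-2$ via~\eqref{eq:k0}, discard those that fail~\eqref{eq:k1}--\eqref{eq:k4}, and check that none survives; the resulting contradiction completes the proof.

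The main obstacle I anticipate is computational rather than conceptual: pinning down $v$ (and $\vert T_\alpha\vert$) exactly, including the central factors $\gcd(2,q-1)$ and $\gcd(3,q-1)$ coming from the simply connected forms of $E_7$ and $E_6$ --- these change $v$ only by bounded factors and do not affect the asymptotics, but must be tracked carefully in the finite check --- and then performing that finite check for the small list of surviving $q$, where the size of $v$ makes the divisor computations behind~\eqref{eq:k0}--\eqref{eq:k4} the practical bottleneck.
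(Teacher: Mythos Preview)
Your approach is correct and follows the paper's template; the only difference is that the paper skips your primary step and goes directly to what you describe as the sharpening. Specifically, the paper quotes Vasilyev~\cite{Vasilev2} for the subdegree $d=q^{27}$, computes that $\gcd(d(d-1),(v-1)(v-2))$ divides $990q(q^9-1)/(q-1)$, and then Lemma~\ref{lm:vvdd}(b) with $f\mid 2e$ forces $q\le 5$, after which the finite check of~\eqref{eq:k0}--\eqref{eq:k4} finds no admissible $k$.
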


\begin{proof}
Suppose for a contradiction that $T_\alpha$ is a maximal parabolic subgroup of type $E_6(q)$.
From~\cite[Theorem~2]{Vasilev2} we see that  $
v=  (q^{14}-1)(q^9+1)(q^5+1)/(q-1)$ and $T_\alpha$ has one orbit on $\mathcal{P}\setminus \{\alpha\}$ with length $d=q^{27}$. 
Since $|\mathrm{Out}(T)|=\mathrm{gcd}(2,q-1)\log_p(q)$, it follows that $f:= \vert G \vert / \vert T \vert $ divides $ 2\log_p(q)$. 
Computation in {\sc Magma}~\cite{Magma} shows that $\mathrm{gcd}(d(d-1),(v-1)(v-2)) $ divides $ 990q(q^9-1)/(q-1)$.
According to Lemma~\ref{lm:vvdd}(b), we derive that
\[
2\log_p(q)\cdot 990q(q^9-1)/(q-1) <\sqrt{v}-2.
\]
Computation shows that the above inequality on $q$ holds only if $q\leq 5$.
However, for any $q\leq 5$, there is no $k$ satisfying the above inequality and \eqref{eq:k0}--\eqref{eq:k4}. 
\end{proof}

\begin{lemma}\label{lm:G2q}
If $T=G_2(q)$, where $q\geq 3$, and $T_\alpha$ is  a maximal subgroup of $T$ isomorphic to $\mathrm{SU}_3(q).2$ or $\mathrm{SL}_3(q).2$, then  $q>10^5$. 
\end{lemma}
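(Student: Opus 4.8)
The plan is to run the procedure of Section~\ref{sec:methods}; the point is that for these two families the arithmetic conditions on $k$ have no admissible solution once $q\le 10^{5}$, which is exactly what the lemma asserts. First I would record the data. Since $|G_2(q)|=q^{6}(q^{6}-1)(q^{2}-1)$, $|\mathrm{SU}_3(q)|=q^{3}(q^{3}+1)(q^{2}-1)$ and $|\mathrm{SL}_3(q)|=q^{3}(q^{3}-1)(q^{2}-1)$, the transitivity of $T$ on $\mathcal{P}$ gives $v=|T|/|T_\alpha|=q^{3}(q^{3}-1)/2$ when $T_\alpha\cong\mathrm{SU}_3(q).2$, so that $v-1=(q^{3}+1)(q^{3}-2)/2$ and $v-2=(q^{6}-q^{3}-4)/2$; and $v=q^{3}(q^{3}+1)/2$ when $T_\alpha\cong\mathrm{SL}_3(q).2$, so that $v-1=(q^{3}-1)(q^{3}+2)/2$ and $v-2=(q^{6}+q^{3}-4)/2$. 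As $|\mathrm{Out}(G_2(q))|$ divides $2e$ we have $|G_\alpha|=|T_\alpha|\,|\mathcal{O}|$ with $|\mathcal{O}|\le 2e$. Substituting into Lemmas~\ref{lm:vk}, \ref{lm:brlambda2} and \ref{lm:akbv}(a) produces exactly the constraints recorded after Theorem~\ref{th:exceptional}: $k<\sqrt{v}+2$, $k-2\mid v-2$, $(k-1)(k-2)\mid(v-1)(v-2)$, $k(k-1)(k-2)\mid v(v-1)(v-2)$, and---after cancelling the large common factor ($q^{3}+1$, resp.\ $q^{3}-1$) and the bounded common factors of what remains with $q^{3}(q^{2}-1)$---the key relation $(q^{3}-2)(q^{6}-q^{3}-4)\mid 192\,k(k-1)(k-2)\,e$ in the first case and $(q^{3}+2)(q^{6}+q^{3}-4)\mid 192\,k(k-1)(k-2)\,e$ in the second.

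Next I would try to bound $q$ from this. Set $N=q^{6}-q^{3}-4$ (resp.\ $q^{6}+q^{3}-4$). From $k<\sqrt{v}+2$ one checks $k\le q^{3}$ for all $q\ge 3$, so any prime $r$ with $r\mid N$ and $r>q^{3}$ divides none of $k,k-1,k-2$ and must therefore divide $192e$; once $192e<q^{3}$ this forces $N$ to have no prime factor exceeding $q^{3}$. For a ``generic'' $q$ this already fails (the largest prime factor of $N$ is usually well above $q^{3}$), so such $q$ are eliminated outright; what survives is the sparse set of $q$ for which $N$ is $q^{3}$-smooth, together with the finitely many small $q$ lying below the threshold $192e<q^{3}$. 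Reinforcing the analysis with Lemma~\ref{lm:XGCD} and, where useful, Lemma~\ref{lm:vvdd}(b) (applied to a subdegree of $G_2(q)$ on the coset space), one pins $k$ into a narrow window just below $\sqrt v$. For each remaining prime power $q\le 10^{5}$ I would then carry out steps \eqref{eq:k0}--\eqref{eq:k4} of Section~\ref{sec:methods}: factor $v-2$, keep the divisors $k-2$ with $k<\sqrt{v}+2$, and eliminate the survivors using \eqref{eq:k2}--\eqref{eq:k4} (and, in the handful of borderline cases, the orbit computation of step~(iv)). In every instance no admissible $k$ remains, which gives $q>10^{5}$.

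The step I expect to be the real obstacle is precisely the one at which this argument ceases to be uniform. The polynomials $q^{6}\mp q^{3}-4$ and $q^{3}\mp 2$ are not products of cyclotomic polynomials, so Lemma~\ref{lm:ppd} on primitive prime divisors and Lemma~\ref{le:divide}---the tools that decisively close the parabolic, Suzuki and Ree cases in Lemmas~\ref{lm:2B2q} and~\ref{lm:2G2q}---have no counterpart here: there is no a priori lower bound on the least prime factor of $v-2$, and no a priori upper bound on its smoothness. As a result the size inequalities of Lemmas~\ref{lm:akbv}(b),(c) and~\ref{lm:vvdd}(b) impose no bound on $q$ at all, and one is forced into explicit factorisation; the value $10^{5}$ is simply the range over which factoring $v-2\approx q^{6}/2$ and running the divisor test (with the occasional orbit computation) is computationally feasible. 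Strengthening the conclusion to non-existence for every $q$ would require some genuinely new control on the arithmetic of $q^{6}\mp q^{3}-4$, which is beyond the present methods.
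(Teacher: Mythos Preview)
Your proposal is correct and follows essentially the same route as the paper: derive the key divisibility $(q^{3}\mp 2)(q^{6}\mp q^{3}-4)\mid 192\,k(k-1)(k-2)\,e$ from Lemma~\ref{lm:akbv}(a) together with the bound $\gcd(|T_\alpha|,(v-1)(v-2))\mid 48(q^{3}\pm 1)$, then run the divisor test \eqref{eq:k0}--\eqref{eq:k4} for every prime power $q\le 10^{5}$. The paper's argument is slightly leaner than yours: it obtains the $\gcd$ bound directly via a polynomial \textsf{XGCD} computation and then checks \eqref{eq:k0}--\eqref{eq:k4} for all $q\le 10^{5}$ without any intermediate smoothness filtering, without invoking Lemma~\ref{lm:vvdd} on subdegrees, and without ever needing the orbit computation of step~(iv); it also uses $f\mid e$ rather than $|\mathcal{O}|\le 2e$ (the graph automorphism in characteristic~$3$ does not preserve these two conjugacy classes of maximal subgroups, so the stabilizer sits inside $T.e$), which is why the constant is $192$ and not $384$. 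Your closing diagnosis---that the obstruction to pushing past $10^{5}$ is the non-cyclotomic shape of $q^{6}\mp q^{3}-4$, so that Lemmas~\ref{lm:ppd} and~\ref{le:divide} give no purchase---is exactly right and matches the paper's stance.
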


\begin{proof}
Note that $ \vert T \vert =q^6(q^6-1)(q^2-1)$. 
By~\cite[Tables~8.30,~8.41~and~8.42]{BHRD2013} we see that $G=T.f$, where $f \mid e$, and $G_\alpha=T_\alpha.f$. Recall Lemma~\ref{lm:akbv}(a), that is, 
\[ \vert G_B \vert (v-1)(v-2) =  \vert G_\alpha \vert k(k-1)(k-2)= \vert T_\alpha \vert fk(k-1)(k-2).\]
It follows that $k(k-1)(k-2)f$ is divisible by $(v-1)(v-2)/\mathrm{gcd}( \vert T_\alpha \vert ,(v-1)(v-2))$.

Assume first that $T_\alpha=\mathrm{SU}_3(q).2$.
Then $v= \vert T \vert / \vert T_\alpha \vert =q^3(q^3-1)/2$. 
By computation  in {\sc Magma}~\cite{Magma}, $\mathrm{gcd}( \vert T_\alpha \vert ,(v-1)(v-2)) \mid 48(q^3+1)$, which implies that
\[
k(k-1)(k-2)f \geq \frac{(v-1)(v-2)}{48(q^3+1)}=\frac{(q^3-2)(q^6-q^3-4)}{192}.
\]
A straightforward calculation shows that,  for any prime power $q\leq 10^5$, there is no $k$ satisfies the above inequality and \eqref{eq:k0}--\eqref{eq:k4}.
 
Assume now that $T_\alpha=\mathrm{SL}_3(q).2$. 
In this case, we have $v=q^3(q^3+1)/2$, and $\mathrm{gcd}( \vert T_\alpha \vert ,(v-1)(v-2)) \mid 48(q^3-1)$, and so
\[
k(k-1)(k-2)f \geq \frac{(v-1)(v-2)}{48(q^3-1)}=\frac{(q^3+2)(q^6+q^3-4)}{192}.
\]
Also, computation shows that those $q\leq 10^5$ are impossible.  
\end{proof}


\noindent\textit{Proof of Theorem~\ref{th:exceptional}}. 
By Lemma~\ref{lm:Ga3>G}, all candidates for $(T,T_\alpha)$ are given in Theorem~\ref{th:exceptional}. 
Notice that parts (a) and (b) of Theorem~\ref{th:exceptional} arise  from Lemma~\ref{lm:2B2q} and Lemma~\ref{lm:G2q}, respectively.

According to Lemmas~\ref{lm:smallcases}--\ref{lm:G2q}, we may assume that $(T,T_\alpha)$ does not satisfy the following.
\begin{enumerate}[\rm (i)]
\item $T \in \{G_2(2)',{}^2G_2(3)', {}^2F_4(2)'\}$, or $(T,T_\alpha)$ is one of those in  Lemma~\ref{lm:smallcases}.
\item $T={}^2B_2(q)$  and $T_\alpha=[q^2]\,{:}\,(q-1)$ is a maximal parabolic subgroup.
\item $T={}^2G_2(q)$  and $T_\alpha=[q^3]\,{:}\,(q-1)$ is a maximal parabolic subgroup. 
\item $T=E_6(q)$  and $T_\alpha$ is a maximal parabolic subgroup of type $D_5(q)$.
\item $T=E_7(q)$  and $T_\alpha$ is a maximal parabolic subgroup of type  $E_6(q)$.
\item $T=G_2(q)$ and $T_\alpha=\mathrm{SL}_3(q).2$ or $\mathrm{SU}_3(q).2$  (the group of type $A_2^{+}(q)$ or $A_2^{-}(q)$ in Table~\ref{tb:largesubgroups}, respectively).
\end{enumerate}

For the remaining candidates,  by computation in {\sc Magma}~\cite{Magma}  using the method in Subsection~\ref{sec:methods}, we can obtain three  polynomials $R_1(q)$, $P_1(q)$ and $Q_1(q)$  such that their coefficients  are integers and
\[R_1(q)=P_1(q) \vert T_\alpha \vert +Q_1(q)(v-1)(v-2).\]
Then  $\mathrm{gcd}( \vert T_\alpha \vert ,(v-1)(v-2))$ divides $R_1(q)$, and from Lemma~\ref{lm:XGCD} we obtain an inequality 
\[
R_1(q)  \vert \mathrm{Out} (T) \vert  >\sqrt{v}-2.
\] 
Computation shows that the inequality on $q$ has only solutions for some $q\leq 10^4$, and for every solution,  there is no possible $(v,k)$ satisfies \eqref{eq:k-1}--\eqref{eq:k4}. 
Therefore, these remaining candidates for $(T,T_\alpha)$ are impossible. \hfill\qed

\bmhead{Acknowledgments} 
We acknowledge the algebra software package {\sc Magma}~\cite{Magma} for computation. 
This work was supported by the National Natural Science Foundation of China (12071484, 12271524, 12071023, 11971054).

\section*{Declarations} 

\begin{itemize}
\item {\bf Conflict of interest} 
The authors declare they have no financial interests.
\item {\bf Availability of data and materials} 
Data sharing not applicable to this article as no datasets were generated or analysed during the current study.
\end{itemize}

\end{document}